\newcommand{\tv}{\tilde{v}_T}
\newcommand{\eexp}[1]{\exp \left( #1 \right)}
\definecolor{Red}{rgb}{0.9,0.1,0.3}
\newcommand{\ev}[1]{\mathbb{E}{#1}}
\newcommand{\rbr}[1]{\left( #1 \right)}
\newcommand{\sbr}[1]{\left[ #1 \right]}
\newcommand{\cbr}[1]{\left\{ #1 \right\}}
\newcommand{\ddp}[2]{\left\langle #1, #2 \right\rangle}
\newcommand{\intr}{\int_{\mathbb{R}^d}}
\newcommand{\inti}{\int_{0}^{+\infty}}
\newcommand{\intc}[1]{\int_{0}^{#1}}
\newcommand{\Rd}{\mathbb{R}^d}
\newcommand{\T}[1]{\mathcal{T}_{#1}}
\definecolor{czerwony}{rgb}{1,0.3,0.3}
\newcommand{\dd}[1]{\textnormal{d}#1}
\newcommand{\SP}{\mathcal{S}'(\Rd)}
\newcommand{\SD}{\mathcal{S}(\Rd)}
\theoremstyle{plain} 
\newtheorem{thm}{Theorem}[section]
\newtheorem{lem}{Lemma}[section]
\newtheorem{prop}{Proposition}[section]
\theoremstyle{remark}\newtheorem{rem}{Remark}[section]
\newtheorem*{acknowledgement*}{Acknowledgement}
\title{Occupation times of subcritical branching immigration system with Markov
motions}
\author{\textsc{Piotr Miłoś}}
\date{}
\begin{document}
\maketitle
\textit{Institute of Mathematics, Polish Academy of Sciences. Śniadeckich 8, 00-956 Warsaw}

\textit{ E-mail: pmilos@mimuw.edu.pl, Tel: +48600973193}
\abstract{We consider a branching system consisting of particles moving  according to a Markov family in $\Rd$ and undergoing subcritical branching with a constant rate $V>0$. New particles immigrate to the system according to homogeneous space-time Poisson random field. The process of the fluctuations of the rescaled occupation time is studied with very mild assumptions on the Markov family. In this general setting a functional central limit theorem is proved. The subcriticality of the branching law is crucial for the limit behaviour and in a sense overwhelms the properties of the particles' motion. It is for this reason that the limit is the same for all dimensions and can be obtained for a wide class of Markov processes. Another consequence is the form of the limit -  $\SP$-valued Wiener process with a simple temporal structure and a complicated spatial one. This behaviour contrasts sharply with the case of critical branching systems (cf. more detailed description in Introduction).}\\

AMS subject classification: primary 60F17, 60G20, secondary 60G15\\

Key words: Functional central limit theorem; Occupation time fluctuations; Branching particles systems with immigration; subcritical branching law

\section{Introduction}
In this paper, we consider the following branching particle system with immigration. Particles evolve independently in $\Rd$ according to a time-homogeneous Markov family $(\eta_t,\mathbb{P}_x)_{t\geq0,x\in \Rd}$. The lifetime of a particle distributed exponentially with a parameter $V>0$. When dying the particle splits according to a binary branching law, determined by the generating function
\begin{equation}
 F(s) = qs^2 + (1-q),\qquad q<1/2. \label{eq:generating}
\end{equation}
This branching law is \textit{subcritical} (i.e. number of particles spawning from one is strictly less then $1$). Each of the new-born particles undertakes movement according to the Markov family $\eta$ independently of the others, branches, and so on. New particles \textit{immigrate} randomly to the system according to a homogeneous Poisson random field in $\mathbb{R}_+\times \Rd$ (i.e. time and space) with the intensity measure $H \lambda_{d+1}$, $H>0$ (where $\lambda_{d+1}$ denotes $d+1$ dimensional Lebesgue measure). Because of immigration the initial distribution of particles does not affect the system in long term. For the sake of  simplicity, we choose it to be a Poisson random field in $\Rd$ with intensity $L \lambda_d$, $L>0$. All random objects, the evolution of particles, the immigration and the initial distribution are (conditionally) independent.\\
The evolution of the system is described by (and in fact can be regarded to be identical with) the empirical (measure-valued) process $(N_t)_{t\geq 0}$, where $N_t(A)$ denotes the number of particles in the set $A\subset\Rd$ at time $t$. We define the fluctuations of the rescaled occupation time process by
\begin{equation}
X_T(t) = \frac{1}{F_T} \intc{Tt} \left( N_s - \ev N_s \right) \dd{s}, \: t \geq 0, \label{def:occupation_process}
\end{equation}
where $T$ is a scaling parameter which accelerates time ($T\rightarrow +\infty$) and $F_T$ is a proper deterministic norming. $X_T$ is a signed-measure-valued process, but it is convenient to regard it as a process in the space of tempered distributions $S'(\Rd)$. The objectives are to find \textit{suitable $F_T$}, such that $X_T$ converges in law as $T\rightarrow +\infty$ to a \textit{non-trivial limit} and to identify this limit.\\
We will discuss some of the related work on the fluctuations of the rescaled occupation time first as it will make it easier to understand our result. The series of papers \cite{Bojdecki:2006ab,Bojdecki:2006aa,Bojdecki:2007aa,Bojdecki:2007ad,Milos:2008aa,Milos:2007ab,Milos:aa,Milos:2008rc} is devoted to the study of systems with particles moving according to a symmetric $\alpha$-stable L\'evy motion and with critical branching (such systems will be referred to as the \textit{critical systems},  contrary to the \textit{subcritical systems} of this paper). The results therein split roughly into three classes depending on the dimension of the state space $\Rd$
\begin{itemize}
 \item ``low dimensions'' -- the system suffers local extinction. The direct study of the fluctuations of the rescaled occupation time does not make sense (except from the systems with immigration in \cite{Milos:2008rc})
 \item ``intermediate dimensions'' -- the limit has a simple spatial structure (Lebesgue measure) and a complicated temporal one (with long range dependence property).
 \item ``large dimensions'' -- the limit has a complicated spatial structure ($\SP$-valued random field) and a simple temporal one (process with independent increments).
\end{itemize}
We study the fluctuations of the rescaled occupation time process for systems with subcritical branching, which has never been done before. The main result is the functional limit theorem contained in Theorem \ref{thm:thm1}. The functional setting makes the result more interesting and much harder to prove. Moreover, we emphasise that the class of systems covered by this theorem is very broad as the restrictions imposed on the Markov family $(\eta_t,\mathbb{P}_x)_{t\geq0,x\in \Rd}$ are mild and natural - cf. Remark \ref{rem:ass} (this is contrary to the critical systems where the proofs rely heavily on the fine properties of a $\alpha$-stable L\'evy motion). In order to apprehend the theorem we turn our attention to three aspects of the result.\\
 Firstly, the theorem is a ``classical'' functional central limit theorem as the normalising factor is $F_T=T^{1/2}$ and the limit is Gaussian, namely a Wiener process. Therefore the temporal structure of the limit is simple - the increments of the process are independent. This contrasts sharply with the spatial structure, which is a $\SP$-valued random field of the form depending on the properties of the Markov family $(\eta_t,\mathbb{P}_x)_{t\geq0,x\in \Rd}$.\\
 Secondly, the subcriticality of the branching law is crucial for the long-term behaviour of the system. The limit if of the same nature in all dimensions, making this case much different from the critical systems, where the phenomenon of ``phase transition'' between ``intermediate'' and ``large dimensions'' is observed. The main reason for this are, roughly speaking, the properties of the movement of particles (recurrence vs. transience). In contrast, in the subcritical system the life-span of the family descending from one particle is short (its tail decays exponentially) hence the properties of the movement plays much smaller r\^ole. Moreover a particle hardly ever visits the same site multiple times which explains the similarity of the result to the one for the critical systems in ``large dimensions'' (i.e. the transient case). It also sheds some light on the origin of the temporal and spatial structures. If we consider two disjoint intervals which are far away, it is very likely that the increments of the occupation time are contributed by distinct families. This results in independent increments. On the other hand the life-span of a family is too short to ``even out the grains in'' which, in turn, gives rise to the complicated spatial structure. We stress that the subcriticality of the branching law influences the limit much more then the immigration. The results for an analogous immigration system but with critical branching \cite{Milos:2008rc} are much different and adhere to the scheme for critical systems presented above. See Remark \ref{rem:ass} for further explanation.  \\
Finally, notice that the systems considered in the paper do not suffer from the local extinction in ``low dimensions'' (due to immigration each set of positive measure is being populated in arbitrarily large times). As it was mentioned already, the limit is of the same nature as in the other dimensions cases.
It is interesting to compare this results with \cite{Bojdecki:2007ab}. ``High density'' technique applied there enables to study the occupation time fluctuations in ``low dimensions'', which led to a similar limits as in the case of ``intermediate dimensions''.\\
To make our paper more comprehensive we present also two illustrative examples in the results section. The example one presents perhaps the most important application of Theorem \ref{thm:thm1} to the system of particles moving according to a L\'evy motion. It can be regarded as a subcritical counterpart of the critical systems considered in earlier papers (it should be stressed however, that we admit much larger class of processes compared to a symmetric $\alpha$-stable motion previously considered). The resemblance to the "large dimensions" case is even more perceptible here - cf. Remark \ref{rem:small_imigraion_limit}. In the second example we consider a system with particles moving according to the Ornstein-Uhlenbeck process. It is intriguing because of the competition of particles attraction towards the origin (caused by the Ornstein-Uhlenbeck process) and their disappearance (caused by the subcriticality of the branching law) - cf. Remark \ref{rem:sub}\\
Recently, occupation time processes have been intensively. Further to the results mentioned previously \cite{Bojdecki:2007ac,Bojdecki:2008aa} presents results for systems with inhomogeneous starting distributions. One should also mention \cite{Birkner:2007aa,Birkner:2005aa} where similar problems are considered in discrete setting (lattice $\mathbb{Z}^d$). Interesting results were also obtained for superprocesses for example \cite{Iscoe:1986aa} and \cite{Hong:2005aa}. In \cite{Hong:2005aa} the authors consider a model very similar to ours, namely subcritical superprocess with immigration. They only examine the spacial structure (which is technically much easier) obtaining a similar result of Gaussian random field. One should also mention ~\cite{Ivanoff:1980aa} which was pioneering papers in the field of systems with immigration.\\
The proof technique is similar to the one from previous papers of Bojdecki et al. However, the subcritical case required developing new equations and dealing with a general Markov family. This required some of the technical arguments to be refined, as the fluctuations of the occupation time of systems with subcritical branching was studied for the first time.\\ 
The paper is organized as follows. In Section \ref{sec:res} we present assumptions and the general theorem (i.e. Theorem \ref{thm:thm1}). Next we give the examples mentioned above. Finally, Theorem \ref{thm:thm1} in Section \ref{sec:proof} and \ref{sec:labx}.

\section{Results} \label{sec:res}
\subsection{Notation}
Before presenting the  results announced in Introduction we clear out a few technical points. $\SP$ is a space of tempered distributions i.e. a nuclear space dual to the Schwartz space of rapidly decreasing functions $\SD$. The duality will be denoted by $\ddp{\cdot}{\cdot}$.\\ 
By $(\T{t})_{t\geq0}$, $A$ we will denote, respectively, the semigroup and the infinitesimal operator corresponding to the Markov family $(\eta_t,\mathbb{P}_x)_{t\geq0,x\in \Rd}$ presented in Introduction.  Sometimes instead of writing $\ev{}_x f(\eta_t)$ we write $\ev{f(\eta^x_t)}$.\\
For brevity of notation we also denote the semigroup
\begin{equation}
 \T{t}^Q f(x) := e^{-Qt}\T{t} f(x)
\end{equation} 
and the potential operator corresponding to it
\begin{equation}
 \mathcal{U}^Q f(x) = \intc{+\infty} \T{t}^Q f(x) \dd{t}.
\end{equation} 
In the whole paper 
\begin{equation}
 Q=V(1-2q),\label{def:Q}
\end{equation}
which intuitively denotes ``intensity of dying'' - recall that $V$ is the intensity of branching and $2q$ is the expected number of particles spawning from one particle. Clearly, subcriticality of the branching law implies $Q>0$.\\ 
Three kinds of convergence are used. The convergence of finite-dimensional distributions is denoted by $\rightarrow_{fdd}$. For a continuous, $\SP$-valued process $X=(X_t )_{t\geq0}$ and any $\tau > 0$ one can define an $\mathcal{S}'(\mathbb{R}^{d+1})$-valued random variable
\begin{equation}
 \ddp{\tilde{X}^\tau}{\Phi} = \intc{\tau}\ddp{X_t}{\Phi(\cdot,t)}\dd{t}, \label{eq:space-time-method}
\end{equation}
If for any $\tau > 0$ $\tilde{X}_n \rightarrow \tilde{X}$ in distribution, we say that the convergence in the space-time sense holds and denote this fact by $\rightarrow_i$. Finally, we consider the functional weak convergence denoted by $X_n\rightarrow_c X$. It holds if for any $\tau > 0$ processes $X_n = (X_n (t))_{t\in[0,\tau]}$ converge to $X = (X(t))_{t\in[0,\tau ]}$ weakly in $\mathcal{C}([0, \tau ], \SP)$ (in the sequel without loss of generality we assume $\tau = 1$). It is known that $\rightarrow_i$ and $\rightarrow_{fdd}$ do not imply each other, but either of them together with tightness implies $\rightarrow_c$. Conversely, $\rightarrow_c$ implies both $\rightarrow_i$ , $\rightarrow_{fdd}$.\\
By $c,c_1,\ldots,C,C_1,\ldots$ we will denote generic constants.

\subsection{General case} \label{sec:subcritical}
Firstly, we present the restrictions imposed on the Markov family $(\eta_t,\mathbb{P}_x)_{t\geq0,x\in \Rd}$. Not only are they mild and quite natural but also are easy to check in a concrete cases (see Section \ref{sec:examples}). First denote quadratic forms
\begin{equation}
  T_1(\varphi) := \intr U^Q\rbr{\varphi(x)U^Q\varphi(x)} \dd{x},\quad \varphi \in \SD, \label{eq:T1}
\end{equation} 
\begin{equation}
  T_2(\varphi) := \intr \int_{0}^{+\infty} U^Q \sbr{ \T{s}^Q\varphi(\cdot) \T{s}^Q U^Q\varphi(\cdot) }(x) \dd{s}\dd{x}, \quad \varphi \in \SD, \label{eq:T2}
\end{equation} 
also, slightly abusing notation, we will denote by $T_1$ and $T_2$ the \textit{bilinear forms} corresponding to them.

\paragraph{Assumptions 1}
\begin{enumerate}
\item[(A1)] We assume that the Markov family $(\eta_t,\mathbb{P}_x)_{t\geq0,x\in \Rd}$ is  almost uniformly stochastically continuous i.e.
\begin{equation}
\forall_n \sup_{x \in (-n,n)} \mathbb{P}_x(\eta_s,B(x,\epsilon)) \rightarrow 1, \quad \text{ as } s\rightarrow 0,
\end{equation} 
where $B(x,\epsilon)$ denotes a ball of radius $\epsilon$ with the center in $x$. Additionally, we assume that for any $x$ trajectories of process are almost surely bounded on any finite interval.
\item[(A1')] Instead of (A1) one can assume stronger but a more natural condition as follows. We assume that the Markov family $(\eta_t,\mathbb{P}_x)_{t\geq0,x\in \Rd}$ is uniformly stochastically continuous i.e.
\begin{equation}
 \sup_x \mathbb{P}_x(\eta_s,B(x,\epsilon)) \rightarrow 1, \quad \text{ as } s\rightarrow 0,
\end{equation} 
where $B(x,\epsilon)$ denotes a ball of radius $\epsilon$ with the center in $x$.
\item[(A2)] Denote by $D_A$ the domain of the infinitesimal operator $A$. We assume
\begin{equation}
 \SD \subset D_A.
\end{equation}
\item[(A3)] Let $\varphi\in \SD$. We assume that the semigroup $(\T{t}^\varphi)_{t\geq 0}$ given by
\begin{equation}
 \T{t}^\varphi f(x) = \ev{}_x \exp\cbr{-\intc{t} \varphi(\eta_s) \dd{s}} f(\eta_t),
\end{equation} 
is a Feller semigroup for any $\varphi$.
\item[(A4)] For any $\varphi\in \SD$
\begin{equation}
  T_1(\varphi) <+\infty,\quad T_2(\varphi) <+\infty. \label{ass:1}
\end{equation} 
\item[(A5)] For any $\varphi\in \SD$
\begin{equation}
 T^{3/2} \intr \T{T}^Q \varphi(x) \dd{x} \rightarrow 0. \label{ass:3}
\end{equation}
\end{enumerate}

\paragraph{Assumptions 2}
\begin{enumerate}
 \item[(A6)] There exists $\epsilon>0$ such that for any $\varphi$
\begin{equation}
              \intr \T{t}^Q \varphi(x) \dd{x} \leq c \rbr{1\wedge t^{-1-\epsilon}}.
             \end{equation} 
\item[(A7)] There exists $\epsilon>0$ such that for any $\varphi$ and for all $h,l$
\begin{equation}
 \intr \T{t}^Q\sbr{\T{h}^Q\varphi(\cdot) \T{l}^Q\varphi(\cdot) }(x) \dd{x} \leq c\rbr{1\wedge t^{-1-\epsilon}}.
\end{equation} 
\end{enumerate}
Now we are ready to formulate the theorem which is the main result of the paper.

\begin{thm} \label{thm:thm1}
Let $X_T$ be the rescaled occupation time fluctuations process given by  (\ref{def:occupation_process}). Assume that $F_T = T^{1/2}$ and assumptions (A1)-(A5) are fulfilled. Then
\begin{equation}
 X_T \rightarrow_{i} X,\quad \text{ and }\quad X_T \rightarrow_{fdd} X,
\end{equation} 
where $X$ is a generalized $\SP$-valued Wiener process with covariance functional
\begin{equation*}
 Cov(\ddp{X_t}{\varphi_1}, \ddp{X_s}{\varphi_2}) = H\rbr{s\wedge t} \rbr{T_1(\varphi_1, \varphi_2) + Vq T_2(\varphi_1, \varphi_2)}.
\end{equation*}
if, additionally, assumptions (A6)-(A7) are fulfilled then
\begin{equation}
 X_T \rightarrow_{c} X.
\end{equation} 
\end{thm}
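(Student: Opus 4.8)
The plan is to identify the limit through the Laplace/characteristic functional of the occupation time and then to promote the resulting finite-dimensional and space-time convergence to functional convergence by a tightness argument. Since $N_s-\mathbb{E}N_s$ is signed, the natural object is the characteristic functional, but it is cleanest to extract it from the Laplace transform of the (nonnegative, measure-valued) uncentred occupation time $\intc{Tt}N_s\,\ds$ and to incorporate the deterministic centring $\mathbb{E}\intc{Tt}N_s\,\ds$ afterwards. Fix a nonnegative test function; for $\rightarrow_i$ one pairs the space-time variable $\tilde{X}_T$ with $\Phi\in\mathcal{S}(\mathbb{R}^{d+1})$, while for $\rightarrow_{fdd}$ one uses a time-step combination $\sum_j\theta_j\varphi_j\ind{[0,t_j]}$. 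Conditioning on the two independent Poisson sources and applying Campbell's formula factorises the functional as $\eexp{-I_{\mathrm{init}}-I_{\mathrm{imm}}}$, where $I_{\mathrm{init}}=L\intr\rbr{1-w^x_T(0)}\dx$ and $I_{\mathrm{imm}}=H\inti\intr\rbr{1-w^x_T(r)}\dx\,\dd{r}$, and $w^x_T(r)$ is the Laplace functional of the occupation time of a single family born at $x$ at time $r$. Assumptions (A1)--(A2) guarantee that the system is well defined with continuous $\SP$-valued occupation paths, and (A3) ensures that the Feynman--Kac semigroup $(\T{t}^\varphi)_{t\geq0}$ is Feller, so that $w^x_T$ solves the mild nonlinear equation associated with the motion semigroup $(\T{t})_{t\geq0}$ and the generating function $F$.

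Next I would expand the nonlinearity to second order in the small parameter $F_T^{-1}=T^{-1/2}$. Writing $F(s)=1+2q(s-1)+q(s-1)^2$ (so $F'(1)=2q$ and $\tfrac12F''(1)=q$), the linear part of the equation reproduces exactly the killed semigroup $\T{t}^Q$, since $V(F'(1)-1)=-Q$, whereas the quadratic part carries the factor $Vq$. The centring annihilates the first-order (mean) contribution, so the leading surviving term is quadratic, which is precisely the source of the Gaussian limit. Because $Q>0$ the potential $\mathcal{U}^Q=\inti\T{t}^Q\,\dt$ converges and all time integrals are finite without invoking any recurrence/transience dichotomy; this is the analytic manifestation of the subcriticality emphasised in the Introduction. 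Collecting the second-order terms, the per-family second moment $\mathbb{E}_x[(\text{occupation})^2]$ splits into a same-lineage part, which integrates to $T_1$ and involves no branching curvature, and a branching part proportional to $\tfrac12VF''(1)=Vq$, which integrates to $T_2$; multiplication by the immigration intensity $H$ gives the announced $H\rbr{T_1+VqT_2}$, the bilinear forms and the factor $s\wedge t$ emerging by polarization from the cross terms. The initial field contributes only $I_{\mathrm{init}}=O(1)$ because those families die out in time $O(1/Q)$, hence it vanishes after division by $F_T^2=T$, explaining the absence of $L$ in the covariance. Assumption (A4) ensures $T_1,T_2<+\infty$, and (A5), namely $T^{3/2}\intr\T{T}^Q\varphi\,\dx\to0$, kills the boundary remainder produced by replacing the finite window $[0,Tt]$ by $[0,+\infty)$ in the potential. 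The independence of increments arises because contributions to widely separated macroscopic times come from families whose birth times differ by an amount of order $T|t-s|\to+\infty$, so their correlation decays exponentially. This yields $X_T\rightarrow_i X$ and $X_T\rightarrow_{fdd}X$.

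Given $\rightarrow_{fdd}$, the functional statement $X_T\rightarrow_c X$ follows once tightness in $\mathcal{C}([0,1],\SP)$ is established. By Mitoma's theorem it suffices to prove tightness of the real-valued processes $\ddp{X_T(\cdot)}{\varphi}$ for each $\varphi\in\SD$. As the limit has increment variance linear in time, a second-moment bound is too weak for Kolmogorov's criterion; I would instead prove a fourth-moment estimate
\begin{equation*}
 \mathbb{E}\sbr{\rbr{\ddp{X_T(t)-X_T(s)}{\varphi}}^4}\leq C(t-s)^2,
\end{equation*}
uniformly in $T$ and in $s,t\in[0,1]$. Expanding this fourth moment through the branching and immigration structure reduces it to space-time integrals of products of the kernels $\intr\T{u}^Q\varphi\,\dx$ and $\intr\T{u}^Q\sbr{\T{h}^Q\varphi\,\T{l}^Q\varphi}\,\dx$; Assumptions (A6)--(A7), which bound these by $c\rbr{1\wedge u^{-1-\epsilon}}$, make the integrals finite and, crucially, uniformly bounded in $T$, yielding the estimate and hence tightness. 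Together with $\rightarrow_{fdd}$ this gives $X_T\rightarrow_c X$.

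The heart of the argument, and the step I expect to be hardest, is the uniform control of the nonlinear remainder. In the identification step one must verify that the higher-order terms in the expansion of $F(1-w_T)$ are genuinely negligible after the $T^{-1/2}$ normalisation and integration over the whole immigration region $\mathbb{R}_+\times\Rd$; since the time integral is unbounded and the smallness of $w_T$ is not uniform, (A4) and (A5) must be deployed precisely to dominate the error. The second and technically heaviest difficulty is the uniform fourth-moment bound behind tightness: the combinatorics of the fourth moment of a branching occupation functional produces numerous terms, each of which must be dominated by $C(t-s)^2$ with constants independent of $T$, and it is exactly the decay rates $1\wedge u^{-1-\epsilon}$ furnished by (A6)--(A7) that make this uniformity possible.
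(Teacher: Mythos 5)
Your proposal is correct and follows essentially the same route as the paper: the Laplace functional of the uncentred occupation time factorised over the two Poisson sources, the one-particle mild equation obtained from the Feller/Feynman--Kac structure of (A1)--(A3), its linearisation around the killed semigroup $\T{t}^Q$ with the quadratic ($Vq$) remainder producing the $T_1+VqT_2$ covariance and the initial field vanishing, and (A4)--(A5) controlling the nonlinear error. The tightness step is also the paper's: Mitoma's theorem plus the uniform fourth-moment bound $C(t-s)^2$ on increments (obtained there by differentiating the Laplace transform four times), with (A6)--(A7) supplying the required integrability.
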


\begin{rem} \label{rem:ass}
Assumptions (A1),(A2),(A3) are typical technical restrictions when dealing with Markov processes. We stress that they are mild and fulfilled easily by any "well-behaving" Markov process. Condition (A4) is natural, as it states only that the limits is well defined (if it is not fulfilled normalization larger then $F_T = T^{1/2}$ is required). Finally to analyse (A5) let us notice that 
\begin{equation}
 \intc{t} \intr \T{s}^Q \mathbf{1}_A(x) \dd{x} \dd{s},
\end{equation}
is the average number of particles in set $A$ for the system starting from the null measure. Intuitively, the aim of (A5) is to ``prevent gathering infinite number of particles'' in any set. Taking this explanation, (A5) seems to be too strong as $T^{-1-\epsilon}$ for any $\epsilon>0$ should be sufficient.\\
To sum up we state questions which raise naturally for further investigation. Firstly, the gap between (A5) and $T^{-1-\epsilon}$ is somehow unpleasant. A natural conjecture is that Theorem \ref{thm:thm1} holds also with this weaker condition. Another, less likely in the author's opinion, possibility is that the gap can be explained in probabilistic manner. Any result in this field would possibly give Theorem \ref{thm:thm1} even more elegant form. Secondly, assumptions (A3),(A4),(A5) impose a certain regime of behaviour on the system, in which the subcriticality suppress the contribution of the motion to the limit. By relaxing them the contribution of the motion ``increases''. Rough calculations suggest that this in turn results in an increase of the norming factor $F_T$ and in the limit with a complicated temporal part. However, with the motion playing larger r\^ole, this case it is not likely to be captured as generally and elegantly as in Theorem \ref{thm:thm1}. The reader is also referred to Remark \ref{rem:sub} for more detailed explanation in a particular example.
\end{rem}
\begin{rem}
 Assumptions (A4),(A5) are clearly technical. It is not obvious whether they are necessary. This question have not received enough attention yet, as the main goal of this paper was to identify the limits. Finding necessary conditions for tightness seems not to be easy, however.
\end{rem}
\begin{rem}
As it was mentioned in Introduction the limit is a $\SP$-valued Wiener process with a simple time structure and a complicated temporal one in all dimensions. This result resembles the result for the system with critical branching in large dimensions. The main reason of this is a short (exponentially-tailed) life-span of a family  descending from one particle. On the one hand it leads to independent increments in the limit (as there are no ``related'' particles in long term). On the other hand the movement is ``not strong enough'' to smooth out the space structure.\\ 
Another remarkable, yet not such unexpected, feature is that the limit can be obtained for "low dimensions". Due to immigration the system no longer suffers from local extinction and the limit can be obtained without special techniques, like high density limits of ~\cite{Bojdecki:2007ab}. 
\end{rem}
\begin{rem}
 Analogous systems but with critical branching were studied in \cite{Milos:2008rc}. The results there are much different from Theorem \ref{thm:thm1} and adhere to the scheme observed for other critical systems (as described in Introduction). This proves clearly that the subcriticality influences systems much more then the immigration, at least with respect to the limit behaviour of the fluctuations of the occupation time.
\end{rem}

\subsection{Examples} \label{sec:examples}
The theorem in the previous section is quite abstract. Now we will present two illustrative examples.
\paragraph*{L\'evy motion} Recall the description of the system $N$ from Introduction. In this example the movement of particles will be given by a L\'evy process - we keep the notation, by $(\eta_t)_t$ we denote the L\'evy motion starting from $0$. Its characteristic function is
\begin{equation}
 \ev{} \sbr{ e^{iz \eta_t}} =  \exp \Bigg(t \Psi(z)\Bigg),
\end{equation} 
where $\Psi$ is the L\'evy-Khinchine exponent i.e.
\begin{equation}
 \Psi(z) = i \ddp{z}{a} - \frac{1}{2} \ddp{Kz}{z} +  \int_{\mathbb{R^d}\backslash\{0\}} \big( e^{i\ddp{z}{x} }-1 -i\ddp{\theta}{x}  \mathbf{1}_{|x|<1}\big)\,\mu(\dd{x}),\quad x\in \Rd,
\end{equation} 
where $a\in \Rd$ (drift term), $K$ is non-negative defined $n\times n$ matrix (covariance of the Gaussian part) and $\mu$ is a (spectral) measure obeying condition $\int_{\mathbb{R^d}\backslash\{0\}} (x^2 \wedge 1) \dd{x} <+\infty$.\\
Let us now check that the L\'evy motion fulfils Assumptions 1. It is a space homogeneous process hence to check (A1') it suffices to show that $\eta_t \rightarrow^P 0$ which follows directly from the characteristic function. (A2) is slightly more difficult - let us take $\varphi\in \SD$; one can check that 
\begin{equation*}
 \frac{\widehat{\T{t}\varphi}(z) - \widehat{\varphi}(z) }{t} \rightarrow^{\mathcal{L}_1} i \Psi(z)\widehat{\varphi}(z).
\end{equation*}
which implies
\begin{equation*}
 \frac{{\T{t}\varphi}(z) - {\varphi}(z) }{t} \rightarrow^{sup} i \widehat{\Psi(z)\widehat{\varphi}(z)}.
\end{equation*}
Hence $\varphi\in D_A$.\\
We skip the proof of (A3) which is as an easy consequence of the space homogeneity and Lebesgue's dominated convergence theorem.\\ 
Recall that $\lambda$ is an invariant measure of the L\'evy motion. We have
\begin{equation}
 T_1(\varphi) = \intr U^Q\rbr{\varphi(x)U^Q\varphi(x)} \dd{x} = \frac{1}{Q}\intr \varphi(x)U^Q\varphi(x) \dd{x}. 
\end{equation} 
It can be checked that $\widehat{U^Q\varphi}(z) = \widehat{\varphi}(z) (Q-\Psi(z))^{-1}$. Applying the Fourier transform to \eqref{eq:T1} we obtain
\begin{equation}
 T_1(\varphi)  = \frac{1}{(2\pi)^d}\frac{1}{Q}\intr \frac{|\widehat{\varphi}(z)|^2}{Q-\Psi(z)} \dd{z}.
\end{equation} 
$T_2$ can be treated in a similar way
\begin{equation}
 T_2(\varphi) = \frac{1}{(2\pi)^d}\frac{1}{Q}\intr \frac{|\widehat{\varphi}(z)|^2}{ (Q-\Psi(z))^2 } \dd{z}.
\end{equation} 
The real part of $\Psi$ is non-positive hence clearly both $T_1(\varphi)$ and $T_2(\varphi)$ are finite, therefore (A4) holds.\\
The assumption (A5) follows easily from calculations below
\begin{equation*}
 \intr \T{T}^Q \varphi(x) \dd{x} = e^{-QT}\intr \T{T} \varphi(x) \dd{x} = c e^{-QT}. 
\end{equation*}
Finally (A6)-(A7), can be proved in the same way. Utilising Theorem \ref{thm:thm1} we obtain
\begin{thm}
Let $X_T$ be the occupation time fluctuation process given by  (\ref{def:occupation_process}) for a system of particles moving according to a L\'evy motion. Then
\[X_T \rightarrow_c X,\quad  \text{ as } T\rightarrow +\infty,\]
where $X$ is a generalized $\SP$-valued Wiener process with covariance functional
\begin{multline}
 	Cov\left( \ddp{X_s}{\varphi_1}, \ddp{X_t}{\varphi_2} \right) = \\
(s\wedge t) \frac{H}{Q} \frac{1}{(2\pi)^d}
\intr \:\left( \frac{1}{Q-\Psi(z)} + \frac{Vq}{\rbr{Q-\Psi(z)}^2  } \right) \widehat{\varphi}_1(z) \overline{\widehat{\varphi}_2(z)} \dd{z},\quad \varphi_1,\varphi_2\in\mathcal{S}\left(\mathbb{R}^{d}\right). \label{eq:cov_levy}
\end{multline}
\end{thm}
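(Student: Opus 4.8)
The plan is to obtain this result as a direct specialisation of Theorem~\ref{thm:thm1}. There are two tasks: first, to verify that a L\'evy motion satisfies assumptions (A1)--(A7), so that the functional convergence $X_T\rightarrow_c X$ applies; second, to evaluate the abstract covariance $H(s\wedge t)\rbr{T_1(\varphi_1,\varphi_2)+VqT_2(\varphi_1,\varphi_2)}$ explicitly in the Fourier domain so as to obtain \eqref{eq:cov_levy}. The computations preceding the statement already sketch most of the verification; below I indicate how I would organise and complete them.

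Throughout I would exploit the two structural features of a L\'evy motion: spatial homogeneity and the diagonalisation of the semigroup by the Fourier transform, $\widehat{\T{t}\varphi}(z)=e^{t\Psi(z)}\widehat{\varphi}(z)$. For (A1') homogeneity reduces the claim to $\eta_t\rightarrow^P 0$, which is read off the characteristic function. For (A2) I would pass to Fourier variables, show that $t^{-1}\rbr{\widehat{\T{t}\varphi}-\widehat{\varphi}}=t^{-1}\rbr{e^{t\Psi}-1}\widehat{\varphi}\to\Psi\widehat{\varphi}$ in $L^1$ (using $\varphi\in\SD$ and the at most polynomial growth of $\Psi$), and then invert to conclude $\varphi\in D_A$ with the generator acting as multiplication by $\Psi$ in frequency, $\widehat{A\varphi}=\Psi\widehat{\varphi}$. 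Assumption (A3) follows from homogeneity together with dominated convergence.

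The heart of the argument is the covariance computation, which rests on the invariance of Lebesgue measure. Invariance gives $\intr\T{t}^Q f(x)\,\dx=e^{-Qt}\intr f(x)\,\dx$ and hence $\intr U^Q f(x)\,\dx=\tfrac1Q\intr f(x)\,\dx$; applied to \eqref{eq:T1} and \eqref{eq:T2} this collapses the outer potential and leaves, respectively, $\tfrac1Q\intr\varphi\,U^Q\varphi\,\dx$ and $\tfrac1Q\inti\intr\T{s}^Q\varphi\cdot\T{s}^Q U^Q\varphi\,\dx\,\ds$. Since $\operatorname{Re}\Psi\le 0<Q$, the resolvent symbol $\widehat{U^Q\varphi}(z)=\widehat{\varphi}(z)\rbr{Q-\Psi(z)}^{-1}$ is well defined and bounded; inserting it and applying Plancherel (for $T_2$ after carrying out the elementary $s$-integral) converts both forms into the stated frequency integrals, the evenness of $|\widehat{\varphi}|^2$ for real $\varphi$ being used to symmetrise the conjugated denominators. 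Polarisation then yields the bilinear versions with $\widehat{\varphi}_1\overline{\widehat{\varphi}_2}$. Finiteness of these integrals, i.e. (A4), is immediate from $\operatorname{Re}\Psi\le 0$, while (A5) is trivial since $\intr\T{T}^Q\varphi\,\dx=c\,e^{-QT}$ decays faster than any power of $T$; the same exponential bound furnishes (A6)--(A7), upgrading $\rightarrow_i$ and $\rightarrow_{fdd}$ to $\rightarrow_c$.

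The steps requiring the most care are the rigorous verification of (A2) --- controlling $t^{-1}\rbr{e^{t\Psi(z)}-1}\widehat{\varphi}(z)$ uniformly enough to secure convergence in the correct topology and to identify the domain of the generator --- and the Fubini/Plancherel manipulations in $T_2$, where one must justify interchanging the $s$- and $z$-integrations and keep track of the complex denominators $Q-\Psi(z)$ together with their conjugates. All remaining assumptions are genuinely routine once the exponential decay $e^{-QT}$ coming from subcriticality is in hand, so I expect no essential difficulty beyond careful bookkeeping.
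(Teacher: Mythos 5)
Your proposal follows essentially the same route as the paper: verify (A1')--(A3) via spatial homogeneity and the Fourier diagonalisation of the semigroup, collapse $T_1$ and $T_2$ using the invariance of Lebesgue measure (so that $\intr U^Q f\,\dx = \tfrac1Q\intr f\,\dx$), pass to the frequency domain with $\widehat{U^Q\varphi}(z)=\widehat{\varphi}(z)(Q-\Psi(z))^{-1}$ to obtain the stated covariance and (A4), and get (A5)--(A7) from the exponential decay $\intr\T{T}^Q\varphi\,\dx = c\,e^{-QT}$ before invoking Theorem \ref{thm:thm1}. Your treatment is, if anything, slightly more careful than the paper's on the points where it is sketchy (the $L^1$/sup argument for (A2), and the bookkeeping of conjugate denominators in $T_2$), so there is nothing to correct.
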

\begin{rem} \label{rem:small_imigraion_limit}
Formally the result resembles the result for the critical branching systems in "large dimensions". Indeed, by converging with branching law to a critical one (i.e. $q\rightarrow0$) and decreasing intensity of immigration (i.e. $H\rightarrow 0$) appropriately in the rhs of the expression above, one gets
\begin{equation}
(s\wedge t) \frac{1}{(2\pi)^d}
\intr \:\left( \frac{1}{-\Psi(z)} + \frac{Vq}{{\Psi^2(z)}  } \right) \widehat{\varphi}_1(z) \overline{\widehat{\varphi}_2(z)} \dd{z},
\end{equation} 
which exactly the limit in theorem \cite[Theorem 2.1]{Bojdecki:2006aa} (with $\Psi(z) = -z^\alpha$ for the symmetric $\alpha$-stable L\'evy motion considered there). The question if this convergence has any probabilistic meaning is natural but has not been addressed yet.
\end{rem}
\begin{rem}
 Denote the spacial part of the limit by $Y$. It easy to notice that it is a homogeneous (generalized) Gaussian random field. The measure
\begin{equation}
 \mu(\dd{z}) :=  \rbr{\frac{1}{Q-\Psi(z)} + \frac{Vq}{\rbr{Q-\Psi(z)}^2  } } \dd{z},
\end{equation} 
is called the spectral measure of $Y$. It is well-known (see e.g. \cite[Preposition 1]{Karczewska:2001gd}) that $Y$ is "classical" i.e. is function-valued random field if and only if its spectral measure is finite. For the system considered in this section this translates to the condition
\begin{equation}
 \intr \frac{1}{Q-\Psi(z)} \dd{z} < +\infty. 
\end{equation} 
In the most important case when the particles move according to the symmetric $\alpha$-stable L\'evy motion the condition is true if and only if $d=1$ and $\alpha\in (1,2]$.
\end{rem}

\paragraph*{Ornstein-Uhlenbeck process} In this example the movement of particles is governed by the Ornstein-Uhlenbeck process. The Ornstein-Uhlenbeck process is the solution of stochastic equation
\begin{equation}
 dX_t = -\theta X_t \,\dd{t} + \sigma\, dW_t, \quad \theta>0,\sigma\neq 0.
\end{equation}
Its semigroup is given by
\begin{equation}
 \T{t}f(x) = \rbr{\mathcal{S}_{ou(t)} f}(xe^{-\theta t}) \label{eq:ou-semigroup}
\end{equation} 
where $\mathcal{S}$ is the semigroup of Wiener process and $ou(t) = (1-e^{-2\theta t})/(2\theta),\quad OU(t) = (e^{2\theta t}-1)/(2\theta)$. \\
Assumptions (A1),(A2) i (A3) can be checked easily from the following representation of the Ornstein-Uhlenbeck process
\begin{equation*}
 X^x_t= x e^{-\theta t} + {\sigma\over\sqrt{2\theta}}W(e^{2\theta t}-1)e^{-\theta t}. 
\end{equation*}
Recall definition \ref{def:Q}. We assume also that $Q>\theta$ (this assumption is crucial and will be explained later in Remark \ref{rem:sub}). We have 
\begin{equation*}
 \intr \T{t}^{Q} f(x) \dd{x} = e^{-( Q - \theta) t} \intr f(x)\dd{x}.
\end{equation*}
Using this equation (A4)-(A6) can be easily verified.\\
Using the Fourier transform we can calculate $T_1$ and $T_2$ in more explicit form. Namely, by \eqref{eq:ou-semigroup} and the fact that the Lebesgue measure is an invariant measure of $\mathcal{S}$ we have 
\begin{equation*}
 T_1(\varphi) = \intr \mathcal{U}^Q\rbr{\varphi(x)\mathcal{U}^Q\varphi(x) } \dd{x} = \frac{1}{Q} \intr \varphi(x)\mathcal{U}^Q\varphi(x) \dd{x}.
\end{equation*}
Using the Fourier transform we get 
\begin{equation*}
 T_1(\varphi) = \frac{1}{(2\pi)^d}\frac{1}{Q} \inti e^{-(Q-\theta)t} \intr \widehat{\varphi}(z) e^{-OU(t)|z|^2} \overline{\widehat{\varphi}(e^{\theta t} z)} \dd{z}
\end{equation*}
Similar calculations for $T_2$ give
\begin{multline*}
 T_2(\varphi) = \\
\frac{1}{(2\pi)^d}\frac{1}{Q} \inti \inti  e^{-(Q-\theta)(2s+u)} \intr \widehat{\varphi}(e^{\theta s}z) e^{-2OU(s)|z|^2} \widehat{\varphi}(e^{\theta t} z) e^{-OU(u)|e^{\theta s}z|^2} \overline{\widehat{\varphi}(e^{\theta (s+u)}z)}\dd{z} \dd{u} \dd{s}.
\end{multline*}
(Recall that quadratic forms $T_1$ and $T_2$ induce corresponding bilinear forms).
Assumptions (A6), (A7) can be easily verified this entitles us to use Theorem \ref{thm:thm1}.\\
\begin{thm}
Let $X_T$ be the rescaled occupation time fluctuation process given by  (\ref{def:occupation_process}) for a system of particles moving according to the Ornstein-Uhlenbeck process. Assume that $Q>\theta$. Then
\[X_T \rightarrow_c X,\quad  \text{ as } T\rightarrow +\infty,\]
where $X$ is a generalized $\SP$-valued Wiener process with covariance functional
\begin{equation*}
 	Cov\left( \ddp{X_s}{\varphi_1}, \ddp{X_t}{\varphi_2} \right) = 
(s\wedge t) \frac{H}{Q} \frac{1}{(2\pi)^d}
 \rbr{T_1(\varphi_1, \varphi_2) + Vq T_2(\varphi_1,\varphi_2)\!\!\frac{}{}},\quad \varphi_1,\varphi_2\in\mathcal{S}\left(\mathbb{R}^{d}\right). \nonumber
\end{equation*}
\end{thm}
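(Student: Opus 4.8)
The plan is to read the theorem off from Theorem \ref{thm:thm1}: once Assumptions (A1)--(A7) are verified for the Ornstein--Uhlenbeck family, the conclusion $X_T \rightarrow_c X$ with the stated covariance is immediate, so the whole task reduces to checking the seven assumptions and making $T_1,T_2$ explicit. The engine of every verification is the scaling built into \eqref{eq:ou-semigroup}: writing $\T{t}^Q = e^{-Qt}\T{t}$, substituting $y = xe^{-\theta t}$ in $\intr (\mathcal{S}_{ou(t)}f)(xe^{-\theta t})\dx$, and using invariance of Lebesgue measure under the Wiener semigroup $\mathcal{S}$ gives
\begin{equation}
 \intr \T{t}^Q f(x) \dx = e^{-(Q-\theta)t} \intr f(x) \dx. \label{eq:ou-key}
\end{equation}
The standing hypothesis $Q>\theta$ makes the exponent strictly negative, and this single fact drives all the decay needed below.

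First I would dispatch (A1)--(A3) from the explicit representation $X^x_t = xe^{-\theta t} + (\sigma/\sqrt{2\theta})\,W(e^{2\theta t}-1)e^{-\theta t}$. Stochastic continuity is clear since $X^x_s \to x$ as $s\to 0$; the deterministic part $x(e^{-\theta s}-1)$ is, however, \emph{not} uniformly small in $x$, so only the almost uniform condition (A1) holds and not (A1'), while on each bounded set $(-n,n)$ the estimate $|x(e^{-\theta s}-1)|\le n(1-e^{-\theta s})$ suffices; a.s.\ boundedness of trajectories on finite intervals is a consequence of path continuity. For (A2) one identifies $A\varphi = \tfrac{\sigma^2}{2}\Delta\varphi - \theta\, x\cdot\nabla\varphi$ and checks $\SD\subset D_A$ directly. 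For (A3) the Feynman--Kac perturbation $\T{t}^\varphi$ is Feller because the Ornstein--Uhlenbeck semigroup is Feller and $\varphi$ is bounded and continuous.

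Next I would verify (A4)--(A7), all consequences of \eqref{eq:ou-key}. Assumption (A5) is immediate: $T^{3/2}\intr \T{T}^Q\varphi(x)\dx = c\,T^{3/2}e^{-(Q-\theta)T}\to 0$, the exponential beating the polynomial precisely because $Q>\theta$; the same comparison yields (A6), since $\intr \T{t}^Q\varphi(x)\dx = c\,e^{-(Q-\theta)t}\le c(1\wedge t^{-1-\epsilon})$. For (A4), integrating \eqref{eq:ou-key} in time shows the potential acts as $\intr \mathcal{U}^Q g(x)\dx = (Q-\theta)^{-1}\intr g(x)\dx$ on integrable $g$, whence $T_1(\varphi)$ and $T_2(\varphi)$ are finite, and passing to the Fourier transform via \eqref{eq:ou-semigroup} and the Gaussian kernel produces the explicit forms displayed before the statement. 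The delicate point is (A7): here I would peel off the outer semigroup by \eqref{eq:ou-key}, reducing the quantity to $e^{-(Q-\theta)t}\intr \T{h}^Q\varphi(x)\,\T{l}^Q\varphi(x)\,\dx$, and then bound the inner integral uniformly in $h,l$ by Cauchy--Schwarz together with the $L^2$-contraction of $\mathcal{S}$. Indeed $\intr (\T{h}\varphi)^2\dx = e^{\theta h}\intr (\mathcal{S}_{ou(h)}\varphi)^2\dy \le e^{\theta h}\|\varphi\|_2^2$, so the factor $e^{-Q(h+l)}$ from the two inner $Q$-semigroups leaves $e^{-(Q-\theta/2)(h+l)}\le 1$, and the whole expression is bounded by $c\,e^{-(Q-\theta)t}\le c(1\wedge t^{-1-\epsilon})$ uniformly in $h,l$.

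The main obstacle is exactly this uniform-in-$(h,l)$ control in (A7): the quadratic structure forces one to handle a product of two rescaled-and-contracted functions, and it is only the margin $Q>\theta$ (in fact $Q>\theta/2$ would suffice for this step in isolation) that keeps the residual exponential bounded. Once (A1)--(A7) are in hand, Theorem \ref{thm:thm1} applies, and substituting the computed $T_1,T_2$ into its covariance functional gives the displayed formula; the role of the hypothesis $Q>\theta$, and what breaks when it fails, is discussed in Remark \ref{rem:sub}.
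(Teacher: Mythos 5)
Your proposal is correct and takes essentially the same route as the paper: verify (A1)--(A7) for the Ornstein--Uhlenbeck family from the representation of the process and the scaling identity $\intr \T{t}^Q f(x)\dx = e^{-(Q-\theta)t}\intr f(x)\dx$ (valid precisely because $Q>\theta$), compute $T_1,T_2$ by Fourier transform, and then invoke Theorem \ref{thm:thm1}. You actually supply details the paper leaves as ``easily verified'' --- notably the observation that only (A1), not (A1'), holds for OU, and the Cauchy--Schwarz/$L^2$-contraction bound for (A7) --- and your potential identity $\intr \mathcal{U}^Q g(x)\dx = (Q-\theta)^{-1}\intr g(x)\dx$ is the one consistent with the scaling identity, whereas the paper's displayed computation of $T_1$ carries a factor $1/Q$.
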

\begin{rem} \label{rem:sub}
 This example is interesting because we can observe struggle of two antagonistic forces. One is the ``exponential attraction'' of particles from the whole space to the proximity of $0$ by the Ornstein-Uhlenbeck process the other is dying out of particles because of the subcriticalty of the branching law. More precisely, denote $\varphi = \mathbf{1}_{B(0,r)}$ then
\begin{equation*}
 \intr \T{t}^Q\varphi(x) \dd{x} = e^{-Qt}\intr  \rbr{\mathcal{S}_{ou(t)} \varphi}(xe^{-\theta t}) \dd{x} = e^{-(Q-\theta) t} |B(0,r)|.
\end{equation*}
is the average number of particles in the ball $B(0,r)$ for the subcritical system (without immigration) starting out from the homogeneous Poisson field. The condition $Q>\theta$ can now be  easily interpreted - the subcriticality is ``strong enough'' to prevent gathering of particles (near $0$).\\ 
This observation raises a natural question what happen when $Q=\theta$ i.e. when the forces are in the perfect balance. Rough calculations suggest that norming factor is greater ( $F_T=T$) and the properties of the motion affect the temporal part of the limit (it is not longer process with independent increments).
\end{rem}

\section{Proofs} \label{sec:proof}
\subsection{Scheme} \label{sec:scheme}
To make the proof clearer we present a general scheme here and defer details to separated sections. Although the processes $X_T$ are signed-measure-valued it is convenient to regard them as processes with values in $\SP$. In this space one may employ a space-time method introduced by \cite{Bojdecki:1986aa} which together with Mitoma’s theorem constitute a powerful technique in proving weak, functional convergence.
\paragraph*{Convergence} From now on we will denote by $\tilde{X}_T$ ($\tau=1$) a space-time variable (recall \eqref{eq:space-time-method} with $\tau=1$) defined for $X_T$. To prove convergence of $\tilde{X}_T$ we will use the Laplace functional
\begin{equation}
 L_T(\Phi) = \ev{e^{-\ddp{\tilde{X}_T}{\Phi}}},\quad \Phi\in \mathcal{S}(\mathbb{R}^{d+1}), \Phi\geq 0.
\end{equation}
For the limit process $X$ denote
\begin{equation*}
 L(\Phi) = \ev{e^{-\ddp{\tilde{X}}{\Phi}}},\quad \Phi\in \mathcal{S}(\mathbb{R}^{d+1}), \Phi \geq 0.
\end{equation*}
Once we establish convergence 
\begin{equation}
 L_T(\Phi) \rightarrow L(\Phi),\quad \text{ as } T\rightarrow +\infty, \forall_{\Phi\in \mathcal{S}(\mathbb{R}^{d+1}), \Phi \geq 0}. \label{lim:laplace}
\end{equation} 
we will obtain week convergence $\tilde{X}_T\Rightarrow \tilde{X}$ and consequently $X_T\rightarrow_i X$. Two technical remark should be made here. We consider only non-negative $\Phi$. The procedure how to extend the convergence to any $\Phi$ is explained in \cite[Section 3.2]{Bojdecki:2006ab}. Another issue is the fact that $\ddp{\tilde{X}_T}{\Phi}$ is not non-negative (which is a usual condition to use the Laplace transform). The usage of the Laplace transform in this paper is justified by the special (Gaussian) form of the limit. For more detailed explanation one can check also \cite[Section 3.2]{Bojdecki:2006ab}.\\
As explained in \cite{Bojdecki:2007aa} due to the special form of the Laplace transform convergence \eqref{lim:laplace} implies also finite-dimensional convergence.\\
Detailed calculations for this part of scheme will be conducted in Section \ref{sec:laplace} and Section \ref{sec:lab}.
\paragraph*{Tightness} Using additional assumptions the tightness can be proved utilizing the Mitoma theorem \cite{Mitoma:1983aa}, which states that tightness of $\cbr{X_T}_T$ with trajectories in $C([0, 1], \SP)$ is equivalent to tightness of $\ddp{X_T}{\phi}$, in $\mathcal{C}([0,\tau],\mathbb{R})$ for every $\phi \in \mathcal{S}(\Rd)$. We adopt a technique introduced in \cite{Bojdecki:2006aa}. Recall a classical criterion \cite[Theorem 12.3]{Billingsley:1968aa}, i.e. a process $\phi \in \mathcal{S}(\Rd)$ is tight if for all $t,s\geq0$
\begin{equation}
 \ev{(\ddp{X_T(t)}{\varphi}, \ddp{X_T(s)}{\varphi})^4} \leq C (t-s)^2. \label{ineq:tightness}
\end{equation}
Following the scheme in \cite{Bojdecki:2006aa} we define a sequence $(\psi_n)_n$ in $\mathcal{S}(\mathbb{R})$, and $\chi_n(u) = \int_u^1 \psi_n(s) \dd{s}$ in a such way that 
\[
\psi_n\rightarrow \delta_t - \delta_s,
\]
\begin{equation}
 0\leq \chi_n \leq \mathbf{1}_{[s,t]}.\label{tmp:chi-n}
\end{equation}
Denote $\Phi_n=\varphi\otimes\psi_n$. We have
\[
 \lim_{n\rightarrow +\infty} \ddp{X_T}{\Phi_n} = \ddp{X_T(t)}{\varphi} - \ddp{X_T(s)}{\varphi}
\]
 thus by Fatou's lemma and the definition of $\psi_n$ we will obtain (\ref{ineq:tightness}) if we prove ($C$ is a constant independent of $n$ and $T$) that
\[
 \ev{\ddp{\tilde{X}_T}{\Phi_n}^4} \leq C (t-s)^2.
\]
From now on we fix an arbitrary $n$ and denote $\Phi:=\Phi_n$ and $\chi := \chi_n$. 
By properties of the Laplace transform we have
\[
 \ev{\ddp{\tilde{X}_T}{\Phi}^4} = \left.\frac{d^4}{d\theta^4}\right|_{\theta = 0}\!\! \ev{\eexp{-\theta \ddp{\tilde{X}_T}{\Phi}}}
\]
Hence the proof of tightness will be completed if we show 
\begin{equation}
 \left. \frac{d^4}{d\theta^4}\right|_{\theta = 0}\!\! \ev{\eexp{-\theta \ddp{\tilde{X}_T}{\Phi}}} \leq C(t-s)^2. \label{eq: laplace-tighness-end}
\end{equation}
Further calculations are deferred to Sections \ref{sec:tightness_plus} and \ref{sec:labt}.

\subsection{Auxiliary facts and one-particle equation}
In this Section we will prove an equation for one particle which will play a key r\^ole in the rest of the proof. Before that we recall a general Feynman-Kac formula which is crucial for its proof.\\
Let $A$ be a (unbounded) linear operator with domain $D_A$. We define a problem
	\begin{equation}
		\left \lbrace 
		\begin{array}{cc}
 \frac{\partial}{\partial t} w(t,x) = A w(t,x) + c(x)w(t,x),\\
w(0,x) = f(x), \quad \quad\quad \quad\quad \quad\quad \quad \:\:
		\end{array} 
		\right . \label{def:problem}
	\end{equation}
where $w(\cdot,t),f\in D_A$.

\begin{prop}[Feynman-Kac formula] \label{prop:feynamn-kac}
Let $(X_t,\mathbb{P}_x)$ be a uniformly stochastically continuous Markov family (cf. assumption (A1)) with values in $\Rd$ with infinitesimal operator $A$. Assume also then $c$ is a uniformly continuous and bounded. Then 
\begin{equation*}
 w(t,x) = \mathbb{E}_x\exp \cbr{\intc{t} c(X_s) \dd{s}}f(X_t),\quad{t\geq0},x \in E,
\end{equation*}
is a solution of \eqref{def:problem}. It is the only solution in the class of functions $w$ such that $\sup_{x} |x(x,t)| \leq e^{\alpha t},$ $\forall_t$ for $\alpha\in \mathbb{R}$.
\end{prop}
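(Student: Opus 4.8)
The plan is to prove the two assertions separately — that the stated $w$ solves \eqref{def:problem} (existence) and that it is the unique solution in the prescribed growth class (uniqueness) — relying throughout on the Markov property and a Dynkin-type martingale formula in place of It\^o calculus, since the family $(X_t,\mathbb{P}_x)$ is assumed only uniformly stochastically continuous.

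For existence, I would first verify that the operators $P_t f(x) := \mathbb{E}_x \exp\{\intc{t} c(X_s)\,\dd{s}\} f(X_t)$ form a semigroup. Splitting $\intc{t+s} c(X_r)\,\dd{r}=\intc{t}c(X_r)\,\dd{r}+\int_t^{t+s} c(X_r)\,\dd{r}$ and conditioning on the natural filtration, the Markov property yields $P_{t+s}=P_tP_s$; the multiplicative functional $\exp\{\intc{\cdot} c(X_r)\,\dd{r}\}$ is precisely what makes this factorisation work, and boundedness of $c$ gives $\|P_t\|\le e^{\|c\|_\infty t}$. I would then compute the generator of $(P_t)$ by writing
\[
\frac{P_t f(x) - f(x)}{t} = \frac{\mathbb{E}_x[f(X_t) - f(x)]}{t} + \frac{1}{t}\,\mathbb{E}_x\!\left[\left(\exp\left\{\intc{t} c(X_s)\,\dd{s}\right\} - 1\right) f(X_t)\right].
\]
The first term tends to $Af(x)$ by definition of $A$ on $f\in D_A\supset\SD$ (by (A2)); for the second I would use $\exp\{\intc{t}c(X_s)\,\dd{s}\}-1=\intc{t}c(X_s)\,\dd{s}+O(t^2)$ together with the uniform continuity of $c$ and the uniform stochastic continuity (A1') to obtain $\tfrac1t\intc{t}c(X_s)\,\dd{s}\to c(x)$, whence the second term tends to $c(x)f(x)$. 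This identifies the generator as $A+c$, so $\partial_t w=\partial_t P_t f=(A+c)P_t f=(A+c)w$, and $w(0,\cdot)=f$ is immediate. A technical point to settle is that $w(t,\cdot)$ remains in $D_A$, which I would extract from the semigroup structure and the Feller-type regularity (A3).

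For uniqueness, let $u$ be the difference of two solutions, so $u$ solves \eqref{def:problem} with $u(0,\cdot)=0$ and satisfies $|u(\tau,y)|\le e^{\alpha\tau}$. Fix $t>0$ and set $v(s,y):=u(t-s,y)$ and $\phi(s):=\exp\{\intc{s} c(X_r)\,\dd{r}\}$. I would invoke the time-dependent Dynkin formula: for $v$ differentiable in $s$ with $v(s,\cdot)\in D_A$, the process $v(s,X_s)-\intc{s}(\partial_r v + Av)(r,X_r)\,\dd{r}$ is a martingale. Combined with the absolutely continuous factor $\phi$ (whose derivative is $c(X_s)\phi(s)$), one finds that $Z_s:=\phi(s)v(s,X_s)$ differs from a martingale by $\intc{s}\phi(r)(\partial_r v + Av + c\,v)(r,X_r)\,\dd{r}$. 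Since $\partial_\tau u=(A+c)u$, the substitution $\tau=t-s$ gives $\partial_s v=-(A+c)v$, so the integrand vanishes identically and $Z$ is a genuine martingale. The bound $|u|\le e^{\alpha\,\cdot}$ and boundedness of $\phi$ make $Z$ uniformly bounded on $[0,t]$, justifying $u(t,x)=Z_0=\mathbb{E}_x Z_t=\mathbb{E}_x[\phi(t)u(0,X_t)]=0$; as $t,x$ are arbitrary, uniqueness follows.

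The main obstacle is that the generality of the motion rules out It\^o calculus, so both halves must be carried out through the semigroup and Dynkin machinery. Within this, the delicate step is the generator computation: controlling $\tfrac1t\mathbb{E}_x[(\exp\{\intc{t} c\}-1)f(X_t)]$ uniformly in $x$, and verifying that the Feynman--Kac semigroup preserves $D_A$, is exactly where the uniform continuity of $c$ and the uniform stochastic continuity hypotheses are used in an essential way.
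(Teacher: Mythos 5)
The paper does not prove this proposition at all: it is explicitly \emph{recalled} as a known Feynman--Kac formula and then used as a black box in the proof of Lemma \ref{lem:equation}, so there is no internal argument to compare yours against. The comparison can only be with the classical proof, and your sketch follows that classical route (the Feynman--Kac semigroup and generator identification for existence, a martingale/Dynkin argument for uniqueness) correctly in its overall structure.

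Two steps, however, are thinner than they need to be. (i) Your generator computation only shows that the generator $L$ of the semigroup $(P_t)$ \emph{extends} $(A+c)|_{D_A}$; but to write $\partial_t w = Aw + cw$ you need $w(t,\cdot)=P_t f$ to lie in $D_A$ itself, i.e. you need $D(L)=D_A$. This does not come from (A3), as you suggest; the standard repair is the bounded-perturbation theorem: since multiplication by $c$ is a bounded operator, $(A+c,D_A)$ generates a strongly continuous semigroup, and since a generator admits no proper extension that is again a generator (compare the resolvents $(\lambda-L)^{-1}$ and $(\lambda-(A+c))^{-1}$ for large $\lambda$), one gets $L=A+c$ with $D(L)=D_A$; then $P_tf\in D(L)=D_A$ and $\partial_t P_tf = LP_tf$ is standard semigroup theory. (ii) In the uniqueness half, the ``time-dependent Dynkin formula'' is invoked for an \emph{arbitrary} solution $u$ in the growth class, and this is exactly where the analytic difficulty sits: to make $v(s,X_s)-\intc{s}(\partial_r v+Av)(r,X_r)\dd{r}$ a martingale via the Riemann-sum/Markov-property argument you need some uniformity, e.g. sup-norm continuity of $s\mapsto Av(s,\cdot)$ and $s\mapsto\partial_s v(s,\cdot)$, which is not literally contained in the hypothesis that $u$ solves \eqref{def:problem} and satisfies $\sup_x|u(x,t)|\leq e^{\alpha t}$. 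Your subsequent use of the growth bound and the boundedness of $\phi$ to conclude $u(t,x)=\mathbb{E}_x Z_t=0$ is correct once the martingale property is secured, and your bounded-variation integration-by-parts treatment of $\phi(s)v(s,X_s)$ is a legitimate substitute for It\^o calculus. Neither point is a wrong turn --- both are the standard technical core of the classical proof --- but as written they are gaps rather than proofs.
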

Recall that $F$ denotes generating function of the branching law \eqref{eq:generating}. We define $G(s) = F(1-s) - (1-s)$ so in our case
\begin{equation}
 G(s) = qs^2 + (1-2q)s. \label{def:G}
\end{equation}
Behavior of the system starting off from a single particle at $x$ is described by the function
\begin{equation}
v_{\Psi}\left(x,r,t\right)=1-\mathbb{E}\exp\left\{ -\int_{0}^{t}\left\langle N_{s}^{x},\Psi\left(\cdot,r+s\right)\right\rangle \dd{s}\right\}, \Psi\geq0 \label{def:v},
\end{equation}
where $N_s^x$ denotes the empirical measure of the particle system with the initial condition $N_0^x = \delta_x$. More precisely $N^x$ is a system in which particles evolve according to the dynamics described in Introduction but without immigration.\\
The lemma gives the announced equation
\begin{lem} \label{lem:equation}
 Assume that $\Psi\geq 0$ and assumptions (A1)-(A3) are fulfilled then
\begin{equation}
 v_\Psi \in [0,1], \label{eq:vin[0,1]}
\end{equation} 
and $v_\Psi$ satisfies equation
\begin{equation}
v_{\Psi}\left(x,r,t\right)=\int_{0}^{t}\mathcal{T}_{t-s}\left[\Psi\left(\cdot,r+t-s\right)\left(1-v_{\Psi}\left(\cdot,r+t-s,s\right)\right)-VG\left(v_{\Psi}\left(\cdot,r+t-s,s\right)\right)\right]\left(x\right)\dd{s}.\label{eq:v-integral}
\end{equation}
\end{lem}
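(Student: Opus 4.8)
The membership \eqref{eq:vin[0,1]} is immediate and I would dispose of it first: since $\Psi\geq0$ and every $N_s^x$ is a non-negative measure, the occupation integral $\intc{t}\ddp{N_s^x}{\Psi(\cdot,r+s)}\ds$ is non-negative, so the exponential in \eqref{def:v} lies in $(0,1]$ and hence $v_\Psi(x,r,t)\in[0,1]$.

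For the integral equation the plan is to set $u:=1-v_\Psi$ and to decompose the evolution of $N^x$ at the first branching event. Let $\tau\sim\mathrm{Exp}(V)$ be the lifetime of the initial particle, independent of the motion $\eta^x$. Conditioning on $\tau$ and on the path of the initial particle, and using the branching property (the sub-populations born at the branch point are independent copies of $N$ started afresh at $\eta_\tau^x$) together with the binary law \eqref{eq:generating}, I expect the renewal equation
\begin{multline*}
 u(x,r,t) = \mathbb{E}_x\Bigg[ e^{-Vt}\exp\cbr{-\intc{t}\Psi(\eta_s,r+s)\ds} \\
 + \intc{t} Ve^{-Vl}\exp\cbr{-\intc{l}\Psi(\eta_s,r+s)\ds}\,F\!\rbr{u(\eta_l,r+l,t-l)}\,\dd{l}\Bigg],
\end{multline*}
where $F$ is the generating function: the factors $e^{-Vt}$ and $Ve^{-Vl}$ come from the law of $\tau$, the inner exponentials carry the occupation weight accumulated before the branch, and $F(u(\cdot))=qu(\cdot)^2+(1-q)$ encodes that with probability $1-q$ the particle dies childless and with probability $q$ it is replaced by two independent copies. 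Establishing this decomposition rigorously --- the measurability, the independence of the offspring sub-trees, and the interchange of expectation with the time integral in the presence of the shift $r+s$ --- is the conceptual heart of the argument and will rely on (A1)--(A3).

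The second step is to turn this renewal equation into \eqref{eq:v-integral}. I would differentiate it in $t$, justified by the Feynman--Kac formula (Proposition \ref{prop:feynamn-kac}), using (A2) to guarantee $\SD\subset D_A$ and the smoothness and boundedness of $\Psi$, to reach the nonlinear evolution equation
\begin{equation*}
 \partial_t u - \partial_r u = Au - \Psi(x,r)\,u + V\rbr{F(u)-u};
\end{equation*}
here the $-Vu$ is produced by differentiating the lifetime factors $e^{-Vt},Ve^{-Vl}$ and combines with the branching source $VF(u)$. Since $F(u)-u=G(1-u)$ (recall $G(s)=F(1-s)-(1-s)$, \eqref{def:G}), passing to $v_\Psi=1-u$ converts this into the killing-free equation
\begin{equation*}
 \partial_t v_\Psi - \partial_r v_\Psi = Av_\Psi + \Psi(x,r)\rbr{1-v_\Psi} - VG(v_\Psi),\qquad v_\Psi(x,r,0)=0.
\end{equation*}
Writing its Duhamel (variation-of-constants) form with respect to the free semigroup $\semi{\mathcal T}$ and integrating along the characteristics $r+t=\mathrm{const}$ of the transport operator $\partial_t-\partial_r$ --- which is precisely what produces the shifted arguments $r+t-s$ in \eqref{eq:v-integral} --- yields the claimed equation.

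The main obstacle I anticipate is this conversion and bookkeeping: tracking the time-inhomogeneity induced by the shift $r+s$ (equivalently, solving along the characteristics on which $r+t$ is constant) while carrying out the Duhamel manipulations that move the killing potential $\Psi$ and the branching term out of the semigroups and onto the source side as $\Psi(1-v_\Psi)-VG(v_\Psi)$. A secondary technical point is that Proposition \ref{prop:feynamn-kac} is stated for uniformly stochastically continuous families, whereas the lemma assumes only the weaker (A1); under (A1) one must localise on the balls $(-n,n)$ (using the almost sure boundedness of trajectories on finite intervals) before applying the Feynman--Kac representation, and the Feller property (A3) is what makes the associated killed semigroups regular enough for the differentiation and the interchange of limits above to be valid.
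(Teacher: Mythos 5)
Your first half coincides with the paper's proof: the bound $v_\Psi\in[0,1]$ is read off from the definition \eqref{def:v}, and the paper likewise begins by conditioning on the first branching time to obtain exactly your renewal equation for $w=1-v_\Psi$, with the lifetime factors $e^{-Vt}$, $Ve^{-Vl}$ and the generating function $F$ in the same places. Your closing remark about localising under (A1) by stopping at exit times from balls is also precisely what the paper does (it introduces $\tau_n=\inf\{t:|\eta^x_t|>n\}$, proves the identity for the stopped processes, and passes to the limit by dominated convergence).

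The gap is in your second step. You propose to differentiate the renewal equation in $t$ and assert that $u=1-v_\Psi$ solves the nonlinear evolution equation $\partial_t u-\partial_r u=Au-\Psi u+V\rbr{F(u)-u}$, ``justified by the Feynman--Kac formula''. But Proposition \ref{prop:feynamn-kac} applies only to \emph{linear} problems whose data lie in the domain of the generator, and here the role of the data is played by $F\rbr{u(\cdot,r+l,t-l)}$, which involves the unknown $u$ itself. Under (A1)--(A3) nothing guarantees that $u(\cdot,r,t)\in D_A$, nor that $u$ is differentiable in $t$ or $r$; a priori $u$ is merely bounded, and even its \emph{continuity} has to be proved (the paper devotes a substantial argument to this, decomposing over the space of ancestor trees and using the Feller property (A3) of the weighted semigroups $\T{}^{\Psi,r}$). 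So the nonlinear PDE you want to write down is not available, and the Duhamel step built on it is unsupported. The paper circumvents exactly this difficulty and never differentiates the nonlinear object: it (i) proves continuity of $w$; (ii) freezes the nonlinearity, treating $F\rbr{w(\cdot,\cdot,s)}$ as a fixed continuous function and approximating it uniformly by $\Theta_n\in\SD$; (iii) applies Feynman--Kac to the \emph{linear} killed problems for $h$ and $k_{\Theta_n}$ and converts them to integral (mild) form, which survives the limit $n\to+\infty$ by dominated convergence; and (iv) substitutes these integral identities back into the renewal equation and recombines them by Fubini-type manipulations to reach \eqref{eq:v-integral} directly. To repair your argument you would need to supply this linearisation-plus-approximation device (together with the continuity proof it rests on), or otherwise establish the regularity of $u$ that your differentiation step silently assumes.
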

Formally, this equation is the same as \cite[(3.22)]{Bojdecki:2006ab}. We have to refine the proof as in this paper we consider a more general case hence 
\begin{proof}
\eqref{eq:vin[0,1]} follows directly from the definition \eqref{def:v}.\\
Now we proceed to the proof of \eqref{eq:v-integral}. Denote
\begin{equation*}
 w_\Psi(x,r,t) := 1-v_\Psi(x,r,t)
\end{equation*}
In the first step we expand (A3) to a slightly more general semigroup. For $\Psi\in \mathcal{S}(\mathbb{R}^{d+1})$, $r\geq 0$ define $\T{}^{\Psi,r}$
\begin{equation*}
 \T{t}^{\Psi,r} f(x) =  \mathbb{E}\exp\left\{ -\int_{0}^{t}\left\langle \eta^{x}_s,\Psi\left(\cdot,r+s\right)\right\rangle \dd{s}\right\} f(\eta^{x}_t)
\end{equation*}
We claim that $\T{}^{\Psi,r}$ is also Feller.\\ 
Define $\Psi_n(x,t) = \sum_{k=1}^{n} \Psi(x,t_k) \mathbf{1}_{[t_{k-1},t_k)}(t)$ where $t_k = tk/n$. Inductive argument (with respect to $n$) implies easily that $\T{}^{\Psi_n,r}f(x)$ is continuous when $f$ is continuous. Indeed one can write
\begin{equation}
 \T{}^{\Psi_n,r}f(x) = \ev{}_x \exp\cbr{-\int_0^{t_1} \Psi(\eta_s,r+t_1) }\exp\cbr{-\int_{t_1}^{t_n} \Psi_n(\eta_s,r+s) \dd{s}}.
\end{equation} 
Using the Markov property we have
\begin{equation}
 \T{}^{\Psi_n,r}f(x) = \ev{}_x \exp\cbr{-\int_0^{t_1} \Psi(\eta_s,r+ t_1) } \T{t-t_1}^{{\Psi}_n,r+t_1}f(\eta_{t_1}) =\T{t_1}^{\Psi(\cdot, t_1+r)} \T{t-t_1}^{{\Psi}_n,r+t_1}f(x).
\end{equation} 
By induction we can assume that $\T{t-t_1}^{{\Psi}_n,r+t_1}f(x)$ is continuous and by the Feller property of $\T{t_1}^{\Psi(\cdot, t_1+r)}$ (assumption (A3)) we get asserted claim. It is obvious that $\T{t}^{\Psi_n,r}f \rightarrow \T{}^{\Psi,r}f$ uniformly hence $\T{}^{\Psi,r}f$ is continuous.\\
 In the next step we will prove that this fact implies the continuity of $w_\Psi$. Denote by $\mathbf{T}$ the space of ancestor trees i.e. a space of binary trees with nodes and leafs labeled by the splitting and death times of particles respectively. The splitting dynamics described in Introduction induces a probability measure on them - $\nu$. Let us notice that in our case the trees are finite almost surely. For a given ancestor tree $\tau$ we can calculate $w_{\tau,\Psi}$ given by
\begin{equation*}
 w_{\tau,\Psi}(x,r,t) := \ev{}\exp \cbr{-\intc{t} \ddp{M^x_s}{\Psi(\cdot,r+s)} \dd{s}},
\end{equation*}
where $M^x$ is the branching particle system with the branching dynamics encoded by $\tau$. Let $|\tau|$ denotes the height of $\tau$. By induction with respect to the height of the tree we can prove that $w_{\tau,\Psi}$ is continuous. For trees of height $1$ it is obvious from the continuity of $z$. Let $\tau$ be a tree such that $n=|\tau|$. Removing the root  splits $\tau$ into two sub-trees $\tau_1$, $\tau_2$. By $t_1$ we denote the label of the root i.e. the time of the first branching (or death if the root is a leaf). If $t_1>t$ the continuity if obvious hence we are remain only with the situation when $t_1<t$. One can write 
\begin{equation*}
 w_{\tau, \Psi}(x,r,t) = \ev{}\exp \cbr{-\intc{t_1} \ddp{\eta^x_s}{\Psi(\cdot,r+s)} \dd{s}}\rbr{w_{\tau_1,\Psi}(\eta^x_{t_1},r+t_1,t-t_1) w_{\tau_2,\Psi}(\eta^x_{t_1},r+t_1,t-t_1) }.
\end{equation*}
Now continuity of $w_{\tau, \Psi}(\cdot,r,t)$ follows from the induction hypothesis and Feller property of $\T{}^{\Psi,r}$. Further, it can be easily proved that  $w_{\tau,\Psi}$ is in fact continuous as a function of three variables. This property infer continuity of $w_{\Psi}$ which is justified by the formula
\begin{equation*}
 w_\Psi(x,r,t) = \int_{\mathbf{T}} w_{\tau, \Psi}(x,r,t) \nu(\dd{\tau}).
\end{equation*}
and  Lebesgue's dominated convergence theorem (recall $0\leq w_{\tau,\Psi} \leq 1$ and $\nu$ is a probability measure). \\Conditioning on the time of the first branching we get
\begin{align*}
w\left(x,r,t\right)= & e^{-Vt}\mathbb{E}\left(-\int_{0}^{t}\Psi(\eta_{s}^{x},r+s)\dd{s}\right)\\
 & +V\int_{0}^{t}e^{-Vs}\mathbb{E}\exp\left(-\int_{0}^{s}\Psi(\eta_{u}^{x},r+u)\dd{u}\right)F\left(w\left(\eta_{s}^{x},r+s,t-s\right)\right).\end{align*}
Let us introduce functions $h$ i $k$
\[
h\left(x,r,t\right):=e^{-Vt}\mathbb{E}\exp\left\{ -\int_{0}^{t}\Psi\left(\eta_{s}^{x},r+s\right)\dd{s}\right\} ,\]
\begin{equation*}
k_s\left(x,r,t\right):=e^{-Vt}\mathbb{E}\exp\left(-\int_{0}^{t}\Psi(\eta_{u}^{x},r+u)
\dd{u}\right)F\left(w\left(\eta_t^{x},r+t,s\right)\right). 
\end{equation*}
Now $w$ can be written as
\begin{equation}
 w\left(x,r,t\right)= h\left(x,r,t\right)+V\int_{0}^{t} k_s\left(x,r,t-s\right)\dd{s}. \label{tmp:gora}
\end{equation} 
The crucial step of the proof is application of Feynman-Kac formula. Assume for a moment that the Markov family fulfils (A1') instead of (A1). Let $\Theta\in \mathcal{S}(\mathbb{R}^{d+1})$ and define 
\begin{equation*}
l_\Theta(x,r,t)=\mathbb{E}\exp\left(-\int_{0}^{t}\Psi(\eta_{u}^{x},r+u) \dd{u}\right) \Theta(\eta_t^x, r+t).
\end{equation*}
Assumptions (A1'), (A2) assert that we can use Proposition \ref{prop:feynamn-kac} (one have to prove that $\Theta$ belongs to the domain of the infinitesimal operator of Markov family $t\rightarrow (X^x_t,r+t)$ - we skip this simple step) hence $l_\Theta(x,r,t)$ is solution of
	\[
		\left \lbrace 
		\begin{array}{cc}
 \frac{\partial }{\partial t}l_\Theta(x,r,t) = \rbr{\Delta_\alpha + \frac{\partial}{\partial r}  - \Psi(x,r)} l_\Theta(x,r,t),\\
l_\Theta(x,r,0) = \Theta(x,r). \quad \quad\quad \quad\quad \quad \quad \quad\quad \quad \:\:
		\end{array} 
		\right . 
	\]
Let us denote  
\begin{equation}
 k_\Theta(x,r,t) = e^{-Vt} l_\Theta(x,r,t). \label{eq:kTheta}
\end{equation} 
Direct computations yield
	\[
		\left \lbrace 
		\begin{array}{cc}
 \frac{\partial }{\partial t}{k}_\Theta(x,r,t) = \rbr{\Delta_\alpha + \frac{\partial}{\partial r}}{k}_\Theta(x,r,t)  - \rbr{\Psi(x,r) +V } {k}_\Theta(x,r,t),\\
{k}_\Theta(x,r,0) = \Theta(x,r). \qquad \qquad\qquad \qquad\qquad \qquad \qquad \quad\quad \quad \:\:
		\end{array} 
		\right . 
	\]
This is an evolution equation which has an integral form 
\begin{equation}
 k_\Theta(x,r,t) = \T{t} \Theta(x,r+t) - \intc{t} \T{t-s} \sbr{\rbr{\Psi(\cdot,r +t -s ) +V } {k}_\Theta(\cdot,r +t -s, s)}(x) \dd{s}. \label{tempa88}
\end{equation}
Now define $\tau_n = \inf\{t:|\eta^x_t|>n\}$ and processes $\eta^{n,x}_t := \eta^x_{\tau_n \wedge t}$. Clearly they are Markov and each of them fulfils (A1) and (A2). We know so far that
\begin{equation}
 k^n_\Theta(x,r,t) = \T{t}^n \Theta(x,r+t) - \intc{t} \T{t-s}^n \sbr{\rbr{\Psi(\cdot,r +t -s ) +V } k^n_\Theta(\cdot,r +t -s, s)}(x) \dd{s}, \label{temp11}
\end{equation}
where $\T{}^n,k^n_\Theta$ denote respectively semigroup and equation \eqref{eq:kTheta} defined for Markov process $\eta^{x,n}$. It easy to show that $k^n\rightarrow k$ (point-wise) and consequently using Lebesgue dominated convergence theorem show that \eqref{tempa88} is fulfilled for any Markov family satisfying (A1).\\
Clearly  $F(w(\cdot,\cdot, s))$ is continuous hence there exists a sequence $(\Theta_n)_n$, $\Theta_n\in \SD$ convergent uniformly to $F(w(\cdot,\cdot, s))$. Applying this to definition \eqref{eq:kTheta} we obtain point-wise convergence
\begin{equation*}
 k_{\Theta_n}(x,r,t) \rightarrow k_s(x,r,t).
\end{equation*}
Now we use dominated Lebesgue's convergence theorem ($k_{\Theta_n}\leq \sup \Theta_n <c $) to the right side of \eqref{tempa88}
\begin{equation}
 k_s(x,r,t) = \T{t} F(w(x,r+t,s)) - \intc{t} \T{t-s} \sbr{\rbr{\Psi(\cdot,r + t -s ) +V } {k}_s(\cdot,r +t -s, s)}(x) \dd{s}.
\end{equation}
Analogously
\begin{equation}
 h(x,r,t) = 1 - \intc{t} \T{t-s} \sbr{\rbr{\Psi(\cdot,r +t -s ) +V } {h}(\cdot,r +t -s, s)}(x) \dd{s}.
\end{equation}
We put the obtained equations in \eqref{tmp:gora}
\begin{multline*}
  w\left(x,r,t\right)= 1 - \intc{t} \T{t-s} \sbr{\rbr{\Psi(\cdot,r +t -s ) +V } {h}(\cdot,r +t -s, s)}(x) \dd{s}+\\V\int_{0}^{t} \T{t-s} F(w(x,r+t-s,s)) \dd{s}-\\
V\int_{0}^{t} \intc{t-s} \T{t-s-u} \sbr{\rbr{\Psi(\cdot,r +t - s - u ) +V } {k}_s(\cdot, r + t - s - u, u)}(x) \dd{u} \dd{s}.
\end{multline*}
We substitute $u\rightarrow u -s $ and change the order of integration
\begin{multline*}
 w\left(x,r,t\right)= 1 - \intc{t} \T{t-s} \sbr{\rbr{\Psi(\cdot,r +t -s ) +V } {h}(\cdot,r +t -s, s)}(x) \dd{s}+\\V\int_{0}^{t} \T{t-s} F(w(x,r+t-s,s)) \dd{s}-\\
\int_{0}^{t} \T{t-u} \rbr{\Psi(\cdot,r +t - u ) +V } \sbr{V\intc{u}   {k}_s(\cdot, r + t - u, u-s)\dd{s}}(x)  \dd{u}.
\end{multline*}
Finally we apply \eqref{tmp:gora} to the second and fourth term
\begin{multline*}
 w\left(x,r,t\right)= 1 - \intc{t} \T{t-s} \sbr{\rbr{\Psi(\cdot,r +t -s ) +V } {w}(\cdot,r +t -s, s)}(x) \dd{s}+\\V\int_{0}^{t} \T{t-s} F(w(x,r+t-s,s)) \dd{s}.
\end{multline*}
Recall that $1-w = v_{\Psi}$,. Finally trivial computations yield asserted \eqref{eq:v-integral}.\\
\end{proof}
We consider the case of subcritical branching ($q<1/2$) in \eqref{eq:generating}. Recall equation \eqref{def:G} and $Q = V(1-2q)$, putting this to equation \eqref{eq:v-integral} gives
\begin{multline}
 v_{\Psi}\left(x,r,t\right)=\int_{0}^{t}\mathcal{T}_{t-s}\left[\Psi\left(\cdot,r+t-s\right)\left(1-v_{\Psi}\left(\cdot,r+t-s,s\right)\right)- \right. \\ \left. Qv_{\Psi}\left(\cdot,r+t-s,s\right) - Vq v_{\Psi}\left(\cdot,r+t-s,s\right)^2 \right]\left(x\right)\dd{s}. \label{eq:equation-sub}
\end{multline}
$v_\Psi$ is quite cumbersome to deal with hence we will approximate it with $\tilde{v}_\Psi$ defined in the following way
\begin{equation}
 \tilde{v}_{\Psi}(x,r,t) = \intc{t} \T{t-s}^Q \Psi(\cdot,r+t-s) \dd{s}, \quad \Psi \in \mathcal{S}(\mathbb{R}^{d+1} ), x\in \Rd, r,t\geq0.\label{sol:tv}
\end{equation} 
It can be easily checked that this function fulfills the equation
\begin{equation}
  \tilde{v}_{\Psi}(x,r,t) = \intc{t} \T{t-s} \left[ \Psi(\cdot, r+t-s) -  Q  \tilde{v}_{\Psi}(\cdot, r+t-s,s) \right](x) \dd{s}. \label{eq:tildev}
\end{equation} 
Intuitively $\tilde{v}_\Psi$ was obtained by dropping quadratic terms in (\ref{eq:equation-sub}) which ``do not play r\^ole'' when $\Psi$ is small. The quality of the approximation is expressed in terms of function $u$
\begin{equation}
 u_{\Psi} := \tilde{v}_{\Psi} - v_{\Psi}. \label{eq:def-u}
\end{equation} 
We have
\begin{lem}
  Let $\Psi\geq0$, then $u_\Psi$ satisfies the equation
\begin{equation}
  u_{\Psi} (x,r,t) = \intc{t}  \T{t-s}^Q \sbr{\Psi(\cdot, r+t-s) v_{\Psi}(\cdot, r+t-s,s) + V q v_{\Psi}^2(\cdot, r+t-s,s)} \dd{s}. \label{eq:delta-v}
\end{equation} 
\end{lem}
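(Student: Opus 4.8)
The plan is to derive the equation for $u_\Psi$ by subtracting the integral equation \eqref{eq:equation-sub} for $v_\Psi$ from the integral equation \eqref{eq:tildev} for $\tilde v_\Psi$, and then to rewrite the resulting linear equation, which carries the killing term $-Q u_\Psi$ under the bare semigroup $\T{t-s}$, in terms of the killed semigroup $\T{t-s}^Q$. Both $v_\Psi$ and $\tilde v_\Psi$ are already written through a common mild form driven by $\T{t-s}$, so the subtraction is immediate.

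Concretely, I would line up \eqref{eq:tildev},
\begin{equation*}
 \tilde v_\Psi(x,r,t) = \intc{t} \T{t-s}\sbr{\Psi(\cdot,r+t-s) - Q\tilde v_\Psi(\cdot,r+t-s,s)}(x)\ds,
\end{equation*}
against the representation \eqref{eq:equation-sub} of $v_\Psi$, which has the extra factor $(1-v_\Psi)$ multiplying $\Psi$ and the extra quadratic term $-Vq\,v_\Psi^2$. Subtracting and invoking the definition \eqref{eq:def-u}, the leading $\Psi(\cdot,r+t-s)$ terms cancel while the contribution $\Psi v_\Psi$ survives, the two killing terms combine into $-Q(\tilde v_\Psi - v_\Psi)=-Q u_\Psi$, and the quadratic part contributes $+Vq\,v_\Psi^2$. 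This yields the linear Volterra equation
\begin{equation*}
 u_\Psi(x,r,t) = \intc{t}\T{t-s}\sbr{ -Q u_\Psi(\cdot,r+t-s,s) + \Psi(\cdot,r+t-s) v_\Psi(\cdot,r+t-s,s) + Vq\, v_\Psi^2(\cdot,r+t-s,s)}(x)\ds.
\end{equation*}

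It then remains to pass from this equation, in which $Q$ appears explicitly under $\T{t-s}$, to the asserted form \eqref{eq:delta-v} driven by $\T{t-s}^Q$. This is precisely the passage already used between \eqref{eq:tildev} and the closed form \eqref{sol:tv}: for any bounded source $g$, the function $\intc{t}\T{t-s}^Q g(\cdot,r+t-s,s)(x)\ds$ solves $w = \intc{t}\T{t-s}\sbr{g - Q w}(x)\ds$. I would verify this Duhamel identity by expanding $\T{t-s}^Q = e^{-Q(t-s)}\T{t-s}$, substituting the candidate into the right-hand side, interchanging the order of integration (Fubini), and using the semigroup property $\T{t-s}\T{s-\sigma}=\T{t-\sigma}$ together with $Q\int_\sigma^t e^{-Q(s-\sigma)}\ds = 1 - e^{-Q(t-\sigma)}$. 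Applying this with $g = \Psi v_\Psi + Vq\,v_\Psi^2$, which is bounded since $v_\Psi \in [0,1]$ by \eqref{eq:vin[0,1]} and $\Psi$ is bounded, exhibits the right-hand side of \eqref{eq:delta-v} as a solution of the Volterra equation above.

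The one point needing care — and the main, though mild, obstacle — is uniqueness, i.e. concluding that $u_\Psi$ \emph{equals} this $\T{}^Q$-form rather than merely solving the same linear equation. Here I would use that every quantity in sight is bounded ($\tilde v_\Psi$ is dominated by $\|\Psi\|_\infty/Q$ and $v_\Psi\in[0,1]$, so $u_\Psi$ is bounded) and that $\T{t-s}$ is a sup-norm contraction; the difference of two bounded solutions then satisfies a homogeneous linear estimate that closes by Gronwall's inequality, forcing it to vanish and establishing \eqref{eq:delta-v}.
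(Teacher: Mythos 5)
Your proposal is correct and follows essentially the same route as the paper's proof: subtract \eqref{eq:tildev} and \eqref{eq:equation-sub} to get the linear Volterra equation \eqref{eq:Delta-v-1} with the $-Qu_\Psi$ term under $\T{t-s}$, verify (treating $v_\Psi$ as a known function) that the right-hand side of \eqref{eq:delta-v} solves it via the Duhamel/semigroup identity, and conclude by uniqueness of bounded solutions. The only difference is cosmetic: the paper settles uniqueness by the Banach contraction principle where you use a Gronwall estimate, which are interchangeable standard tools for this linear equation.
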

\begin{proof}
Subtracting equations \eqref{eq:equation-sub} and \eqref{eq:tildev} we obtain
\begin{equation}
u_{\Psi} (x,r,t) = \intc{t} \T{t-s} \sbr{-Q u_{\Psi}(\cdot, r+t-s,s) + \Psi(\cdot, r+t-s) v_{\Psi}(\cdot,r+t-s,s) + V q v_{\Psi}^2(\cdot, r+t-s,s)} \dd{s}. \label{eq:Delta-v-1}
\end{equation}
Although we do not know solution of \eqref{eq:equation-sub} we may treat $v_\Psi$ as a known function. It is easy to check that \eqref{eq:delta-v} solves \eqref{eq:Delta-v-1}. Standard application of the Banach contraction principle proves that that it is unique.
\end{proof}
\paragraph{Notation} For now on we fix non-negative $\Phi$ and prove convergences announced in the scheme in Section \ref{sec:scheme}. To make the proof shorter we will consider $\Phi$ of a special form
\begin{equation}
 \Phi(x,s) = \varphi(x)\psi(s), \varphi\in \SD, \psi \in \mathcal{S}(\mathbb{R}),\varphi \geq 0,\psi \geq 0. \label{def:simplification}
\end{equation}
We also denote
\begin{equation}
 \varphi_{T}\left(x\right)=\frac{1}{F_{T}}\varphi\left(x\right), \:\chi(s) = \int_s^1\psi(u) \dd{u},\: \chi_{T}=\chi\left(\frac{t}{T}\right). \label{def:notation-T}
\end{equation}
We write
\begin{equation*}
 \Psi(x,s) = \varphi(x)\chi(s),
\end{equation*}
\begin{equation}
  \Psi_{T}\left(x,s\right)=\frac{1}{F_{T}}\Psi\left(x,\frac{s}{T}\right) = \varphi_T(x)\chi_T(s). \label{def:simpl}
\end{equation}
note that $\Psi$ and $\Psi_T$ are positive functions.
In the sequel, we also write
\begin{equation}
 v_T(x,r,t)=v_{\Psi_T}(x,r,t) \:\text{ and }\: v_T(x) = v_T(x,0,T), \label{eq:simpl1}
\end{equation}
and
\begin{equation*}
 \tilde{v}_T := \tilde{v}_{\Psi_T},\quad u_T := u_{\Psi_T}.
\end{equation*}
It is obvious now that $u_{\Psi}\geq0$ which together with equations \eqref{eq:def-u} and \eqref{sol:tv}  implies
\begin{equation}
  0\leq v_T \leq \tv \leq \frac{C_{\Psi}}{F_T}. \label{ineq:tv>v}
\end{equation}
We will also use the following simple estimation 
\begin{equation}
   u_T (x,r,t) \leq  \frac{C}{F_T^2}.  \label{eq:estimate2}
\end{equation}
Fix $\Psi$ and denote
\begin{equation}
 v(\theta) = v_{\theta \Psi}
\end{equation} 
In the sequel we will need derivatives of $v$ with respect to $\theta$. It is easy to calculate by \eqref{eq:equation-sub} that (we omit arguments and integration variables)
\begin{equation}
 v'(\theta) = \int_{0}^{t}\mathcal{T}_{t-s}\left[\Psi \left(1-v(\theta)\right)-\theta \Psi v'(\theta) - Qv'(\theta) - 2 Vq v(\theta)v'(\theta)  \right].\label{eq:trata1}
\end{equation} 
When $\theta=0$ then
\begin{equation}
 v'(0) = \int_{0}^{t}\mathcal{T}_{t-s}\left[\Psi - Qv'(0)  \right]. \label{eq:vprim}
\end{equation} 
It is easy to notice that it is the same equation as \eqref{eq:tildev} hence $\tilde{v} = v'(0)$ (note that the above calculation is not quite rigorous as one have to justify differentiation under integral in \eqref{eq:trata1}).

\subsection{Laplace transform} \label{sec:laplace}
In this section we calculate the Laplace transform of the space-time variable $\tilde{X}_T$. Let us recall that the initial distribution is given by a Poisson random field with intensity $L\lambda,L>0$ and the immigration is determined by a Poisson random field on $\mathbb{R}_+\times\Rd$ with intensity $H\rbr{\lambda \otimes \lambda},H>0$. We can split the system $N$ into two independent parts
\begin{equation*}
 N_t = N_t^0 + N^{Imm}_t,
\end{equation*}
where $N^0$ consists of particles being in the system at time $t=0$ and their offspring while $N^{Imm}$ is the immigration part with particles which appeared in the system after $t=0$ and their descendants.\\
The first step is calculating the Laplace transform of the space-time variable corresponding to rescaled occupation time process $\tilde{Y}$. It is easy to check that 
\begin{equation}
 \ddp{\tilde{Y}_T}{\Phi} = \frac{T}{F_T} \sbr{\intc{1}\ddp{N_{Ts}}{\Psi(\cdot,s)}\dd{s}} = \sbr{\intc{T}\ddp{N_{s}}{\Psi_T(\cdot,s)}\dd{s}}. \label{eq:laplace-derivation}
\end{equation}
Denote (recall \eqref{def:simpl} for the relation between $\Psi$ and $\Phi$)
\begin{equation}
 K_T(\Phi) = \ev{\exp\rbr{ -\ddp{\tilde{Y}_T }{\Phi}  }  } = \ev{\exp\rbr{-\sbr{\intc{T}\ddp{N_{s}}{\Psi_T(\cdot,s)}\dd{s}}} }.
\end{equation}
We can write
\begin{eqnarray}
K_T(\Phi) =  \ev{\exp{\cbr{-\intc{T}\ddp{N^{0}_{s}}{\Psi_T(\cdot,s)}\dd{s}}}}\ev{\exp{\cbr{-\intc{T}\ddp{N^{Imm}_{s}}{\Psi_T(\cdot,s)}\dd{s}}}}.
\end{eqnarray} 
 Firstly evaluate the term with $N^{Imm}$. Conditioning with respect to $Imm$, using independence of evolution of particles (\textit{branching Markov property}) and equation \eqref{def:v} we obtain
\begin{multline}
 {\ev{\rbr{\left.\exp{\cbr{-\intc{T}\ddp{N^{Imm}_{s}}{\Psi_T(\cdot,s)}\dd{s}}}\right|Imm}}} =\\ \prod_{(t,x) \in \widehat{Imm}} \ev{\exp{\cbr{-\int_t^T\ddp{N^x_{s-t}}{\Psi_T(\cdot,s)}\dd{s}}}} = \prod_{(t,x) \in \widehat{Imm}}\rbr{1-v_{\Psi_T}(x,t,T-t)},
\end{multline}
where $\widehat{Imm}$ is a (random) set such that $\sum_{(t,x) \in\widehat{Imm}} \delta_{(t,x)} =Imm\: a.s.$ and $\delta_{(t,x)}$ corresponds to a particles which immigrate to the system at time $t$ to location $x$. Hence we have
\begin{equation*}
 \ev{\exp{\cbr{-\intc{T}\ddp{N^{Imm}_{s}}{\Psi_T(\cdot,s)}\dd{s}}}} = \ev{\exp\cbr{\ddp{Imm}{\log (1-v_T(\cdot,\star,T-\star)}}}, 
\end{equation*}
where $\cdot$,$\star$ denote integration with respect to space and time, respectively. Taking into account distribution of $Imm$ we obtain
\begin{equation*}
  \ev{\exp{\cbr{-\intc{T}\ddp{N^{Imm}_{s}}{\Psi_T(\cdot,s)}\dd{s}}}} =  \exp\cbr{  -H \intc{T} \intr v_{\Psi_T}(x,T-t,t)  \dd{x} \dd{t}}.
\end{equation*}
The first term is easier and can be treated similarly - we have
\begin{equation}
 \ev{\exp{\cbr{-\intc{T}\ddp{N^{0}_{s}}{\Psi_T(\cdot,s)}\dd{s}}}} = \exp\cbr{  -L  \intr v_{\Psi_T}(x,0,T)  \dd{x}}.
\end{equation} 
Finally we have 
\begin{equation}
 K_T(\Phi) = \exp\cbr{  -H \intc{T} \intr v_{\Psi_T}(x,T-t,t)  \dd{x} \dd{t} - L  \intr v_{\Psi_T}(x,0,T)  \dd{x} }
\end{equation} 
By the properties of the Laplace transform we have (recall also that $v_T'(0) =\tilde{v}_T$ - see \eqref{eq:vprim} and  $v(0)=0$)
\begin{equation*}
 \ev{}\ddp{\tilde{Y}_T}{\Phi} = \left.\frac{d}{\dd{\theta}}\right|_{\theta=0} K_T(\theta \Phi) =
	 { -H \intc{T} \intr \tilde{v}_{\Psi_T}(x,T-t,t)  \dd{x} \dd{t} - L  \intr \tilde{v}_{\Psi_T}(x,0,T)  \dd{x}  }.
\end{equation*}
Now we can calculate the Laplace transform of $\tilde{X}_T$. Using definition of $u_T$ \eqref{eq:def-u}, simple fact that $\tilde{X}_T = \tilde{Y}_T - \ev{\tilde{Y}_T}$ we obtain
\begin{equation}
  L_T(\Phi)=\ev{\exp\cbr{-\ddp{\tilde{X}_T}{\Phi}}} =  \exp\cbr{L\intr u_T(x,0,T)\dd{x}}  \exp\cbr{  H \intc{T} \intr u_T(x,T-t,t)  \dd{x} \dd{t}}. \label{eq:laplace-imm}
\end{equation}
Now the task is to show limit of \eqref{eq:laplace-imm}. Using \eqref{eq:delta-v} one obtains
\begin{eqnarray}
	 \mathbb{E}\exp\left\{ -\left\langle \tilde{X}_{T},  \Phi\right\rangle \right\}  = \exp \cbr{L \rbr{A_1(T) +  A_2(T)} + H \rbr{A_3(T)+A_4(T)}}, \label{eq:split1}
\end{eqnarray}
where
\begin{equation}
 A_1(T) = \intr\: \intc{T}  \T{T-s}^Q\sbr{\Psi_T(\cdot,T-s) v_T(\cdot, T-s,s)}(x) \dd{s} \dd{x}, \label{def:A1}
\end{equation}
\begin{equation}
 A_2(T) = Vq \intr\: \intc{T}  \T{T-s}^Q v_T^2(x, T-s,s) \dd{s} \dd{x},\label{def:A2}
\end{equation}
\begin{equation}
 A_3(T) = \intr\: \intc{T} \intc{t}  \T{t-s}^Q\sbr{\Psi_T(\cdot,T-s) v_T(\cdot, T-s,s)}(x) \dd{s} \dd{t} \dd{x}, \label{def:A3}
\end{equation}
\begin{equation}
 A_4(T) = Vq \intr\: \intc{T} \intc{t}  \T{t-s}^Q v_T^2(x, T-s,s) \dd{s} \dd{t} \dd{x}.\label{def:A4}
\end{equation}
The first part of Theorem \ref{thm:thm1} will be proved once we have established 
\begin{equation}
 A_1(T)\rightarrow 0, A_2(T)\rightarrow 0, \quad \text{ as } T\rightarrow +\infty. \label{lim:a1}
\end{equation}
\begin{equation}
 A_3(T)\rightarrow \intc{1} \chi(1-s)^2 \intr U^Q\sbr{\varphi(\cdot) U^Q \varphi(\cdot) }(x)\dd{x} \dd{s}, \quad \text{ as } T\rightarrow +\infty.\label{lim:a3}
\end{equation}
\begin{equation}
 A_4(T)\rightarrow  2Vq \intc{1} \chi(1-v_1)^2 \int_{0}^{+\infty}  \intr U^Q \sbr{ \T{s}^Q\varphi(\cdot) \T{s}^Q U^Q\varphi(\cdot) }(x) \dd{x}   \dd{s}  \dd{v_1}, \quad \text{ as } T\rightarrow +\infty.\label{lim:a4}
\end{equation}
In the next section we will prove \eqref{lim:a3}, \eqref{lim:a4}. The poofs of \eqref{lim:a1} are simpler and are left to the reader.
\subsection{Tightness} \label{sec:tightness_plus}
Recall that we continue the proof according to the scheme in Section \ref{sec:scheme}. First, we will compute the left hand side of \eqref{eq: laplace-tighness-end}. We adopt the following notation - denote $\Phi_{\theta,T} = \theta \Phi_T$ and  $\Psi_{\theta,T} = \theta \Psi_T= \theta\varphi_T\otimes \chi_T$ related to $\Phi_{\theta,T}$ by equation \eqref{def:simpl}. Additional parameter $\theta$ will indicate that a particular quantity is calculate for $\Phi_{\theta,T}$ or $\Psi_{\theta,T}$. Hence using \eqref{eq:split1} we can write (we additionally assume that $L=0,H=1$, the proof without this assumptions goes exactly the same lines but is longer)
\begin{equation}
 \mathbb{E}\exp\left\{ -\left\langle \tilde{X}_{T},  \theta \Phi\right\rangle \right\} = \exp \cbr{ {A_{3}(\theta,T) +  A_{4}(\theta,T) }}. \label{abc}
\end{equation}
For sake of consistency we denote
\[
v(\theta) := v(\theta)(x, r, t) = v_{\Psi_{\theta, T}}(x, r,t),
\]
Differentiating \eqref{eq:equation-sub} and evaluating at $\theta=0$ yields (we skip arguments and integration variables)
\begin{equation*}
 v(0) = 0,
\end{equation*}
\begin{equation*}
  v'(0) = \intc{u} \T{u-s} \sbr{ \Psi_T  - Q v'(0)},
\end{equation*}
\begin{equation*}
  v''(0) = \intc{u} \T{u-s} \sbr{  - 2 \Psi_T v'(0) - Q v''(\theta)  - 2 Vq v'(0)^2 },
\end{equation*}
\begin{equation*}
  v'''(0) = \intc{u} \T{u-s} \sbr{  - 3 \Psi_T v''(0)  - Q v'''(0)  - 5Vq v''(0) v'(0)  }.
\end{equation*}
These equations can be solved (we skip detailed calculations)
\begin{equation}
v'(0)(x,r,t) = \intc{t} \T{t-s}^Q \Psi_T(x, r+t-s) \dd{s}. \label{kupkaka1}
\end{equation}
\begin{multline}
 v''(0)(x,r,t) = -2 \intc{t} \T{t-s}^Q \left[ \Psi_T(x,r+t-s) v'(0)(x,r+t-s,s)\right. +\\
   \left. Vq v'(0)(x,r+t-s,s)^2\right]  \dd{s},\label{kupkaka2}
\end{multline}
\begin{multline*}
 v'''(0)(x,r,t) = - \intc{t} \T{t-s}^Q \left[  3 \Psi_T(x,r+t-s) v''(0)(x,r+t-s,s)\right. +\\  \left. 5 Vq v''(0)(x,r+t-s,s)v'(0)(x,r+t-s,s) \right] \dd{s}.
\end{multline*}
Differentiating equations \eqref{def:A3} and \eqref{def:A4} and evaluating at $\theta=0$ one gets (in the last expression we skip arguments)
\begin{equation*}
 A_3(0,T) = 0,\quad  A_3'(0,T) = 0,
\end{equation*} 
\begin{equation*}
 A_3^{(i)}(0,T) =  i\: \intr\: \intc{T} \intc{t}  \T{t-s}^Q\sbr{\Psi_T(x,T-s) v^{(i-1)}(0)(\cdot, T-s,s)}(x) \dd{s} \dd{t} \dd{x}, \quad i\geq2.
\end{equation*}
\begin{equation*}
 A_4(0,T) = 0,\quad  A_4'(0,T) = 0,
\end{equation*} 
\begin{equation}
 A_4''(0,T) = 2Vq \intr\: \intc{T} \intc{t}  \T{t-s}^Q v(0)'(x, T-s,s)^2 \dd{s} \dd{t} \dd{x}.
\end{equation}
\begin{equation}
 A_4^{(IV)}(0,T) = Vq \intr\: \intc{T} \intc{t}  \T{t-s}^Q \rbr{v(0)'''v(0)' + (v(0)'')^2} \dd{s} \dd{t} \dd{x}. \label{eq:holahola}
\end{equation}
Now we are ready to differentiate \eqref{abc}
\begin{equation*}
 \left.\frac{d^4}{d\theta^4}\right|_{\theta = 0} \exp \cbr{ {A_{3}(\theta,T) +  A_{4}(\theta,T) }}=
A_{3}^{IV}(0,T) + A_{4}^{IV}(0,T) + 3(A_{3}''(0,T) + A_{4}''(0,T))^2
\end{equation*}
Now in order to show \eqref{eq: laplace-tighness-end} it suffices to prove
\begin{equation}
 A_{3}^{IV}(0,T) \leq c (t-s)^2,\quad A_{4}^{IV}(0,T) \leq c (t-s)^2, \label{eq:mamama1}
\end{equation}
\begin{equation}
 A_{3}'''(0,T) \leq c (t-s),\quad A_{4}'''(0,T) \leq c (t-s).\label{eq:mamama2}
\end{equation}
Example computations will be shown in Section \ref{sec:labt}

\section{Calculations} \label{sec:labx}
\subsection{Calculations - convergence} \label{sec:lab}
\paragraph{Convergence of $A_{3}$} Firstly, we replace  $v$ with $\tilde{v}$. Secondly, we calculate the limit for such expression. In the end we will prove that the change do not affect the limit
\begin{equation*}
 \tilde{A}_{3}(T) = \intr\: \intc{T} \intc{t}  \T{t-s}^Q\sbr{\Psi_T(\cdot,T-s) \tilde{v}_T(\cdot, T-s,s)}(x) \dd{s} \dd{t} \dd{x}.
\end{equation*}
Using equation \eqref{sol:tv} and Fubini's theorem we get
\begin{equation*}
 \tilde{A}_{3}(T) = \intr\: \intc{T} \intc{t} \intc{s} \T{t-s}^Q\sbr{\Psi_T(\cdot,T-s) \T{s-u}^Q \Psi_T(\cdot,T-u) }(x) \dd{u} \dd{s} \dd{t} \dd{x}.
\end{equation*}
Using \eqref{def:simpl} and Fubini's theorem once more one can write
\begin{equation*}
 \tilde{A}_{3}(T) =  \intc{T} \intc{t} \intc{s} \chi_T(T-s) \chi_T(T-u) \intr \T{t-s}^Q\sbr{\varphi_T(\cdot) \T{s-u}^Q \varphi_T(\cdot) }(x)\dd{x} \dd{u} \dd{s} \dd{t}.
\end{equation*}
Changing variables $t\rightarrow Tt$, $s \rightarrow Ts$, $u\rightarrow Ts$ and using \eqref{def:notation-T} we have
\begin{equation*}
 \tilde{A}_{3}(T) =  \frac{T^3}{F_T^2} \intc{1} \intc{t} \intc{s} \chi(1-s) \chi(1-u) \intr \T{T(t-s)}^Q\sbr{\varphi(\cdot) \T{T(s-u)}^Q \varphi(\cdot) }(x)\dd{x} \dd{u} \dd{s} \dd{t}.
\end{equation*}
Changing the order of integration and changing $u\rightarrow s-h$ one obtains
\begin{equation*}
 \tilde{A}_{3}(T) =  \frac{T^3}{F_T^2} \intc{1} \intc{s} \chi(1-s) \chi(1-s+h) \int_{s}^{1} \intr \T{T(t-s)}^Q\sbr{\varphi(\cdot) \T{Th}^Q \varphi(\cdot) }(x)\dd{x} \dd{t} \dd{h} \dd{s}.
\end{equation*}
Finally changing $t\rightarrow t+s$ we obtain
\begin{equation*}
 \tilde{A}_{3}(T) =  \frac{T^3}{F_T^2} \intc{1} \intc{s} \chi(1-s) \chi(1-s+h) \int_{0}^{1-s} \intr \T{Tt}^Q\sbr{\varphi(\cdot) \T{Th}^Q \varphi(\cdot) }(x)\dd{x} \dd{t} \dd{h} \dd{s}.
\end{equation*}
Now it is obvious that 
\begin{equation}
 \tilde{A}_{3}(T) = \tilde{A}_{31}(T) + \tilde{A}_{32}(T),
\end{equation} 
where
\begin{equation*}
 \tilde{A}_{31}(T) =  \frac{T^3}{F_T^2} \intc{1} \chi(1-s)^2 \intc{s}  \int_{0}^{1-s} \intr \T{Tt}^Q\sbr{\varphi(\cdot) \T{Th}^Q \varphi(\cdot) }(x)\dd{x} \dd{t} \dd{h} \dd{s}.
\end{equation*}
\begin{equation*}
 \tilde{A}_{32}(T) =  \frac{T^3}{F_T^2} \intc{1} \chi(1-s) \intc{s} \sbr{\chi(1-s+h)-\chi(1-s)}  \int_{0}^{1-s} \intr \T{Tt}^Q\sbr{\varphi(\cdot) \T{Th}^Q \varphi(\cdot) }(x)\dd{x} \dd{t} \dd{h} \dd{s}.
\end{equation*}
Recall $F_T=T^{1/2}$ and change integration variables $t\rightarrow t/T$ and $h\rightarrow h/T$
\begin{equation*}
 \tilde{A}_{31}(T) =  \intc{1} \chi(1-s)^2 \intc{Ts}  \int_{0}^{T(1-s)} \intr \T{t}^Q\sbr{\varphi(\cdot) \T{h}^Q \varphi(\cdot) }(x)\dd{x} \dd{t} \dd{h} \dd{s}.
\end{equation*}
Fubini's and Lebesgue's monotone convergence theorems imply immediately
\begin{equation}
 \tilde{A}_{31}(T) \rightarrow \intc{1} \chi(1-s)^2 \intr U^Q\sbr{\varphi(\cdot) U^Q \varphi(\cdot) }(x)\dd{x} \dd{s}, \quad \text{ as } T\rightarrow +\infty. \label{eq:conv-1}
\end{equation}
Notice also that by assumption (A5) the integral is finite.
For a $\delta>0$ one can choose $\epsilon>0$ such that $\sup_{h\in (0,\epsilon)}|\chi(1-s+h)-\chi(1-s)|<\delta$ we have
\begin{multline*}
 |\tilde{A}_{32}(T)| \leq  \delta \frac{T^3}{F_T^2} \intc{1}  \intc{\epsilon} \int_{0}^{1-s} \intr \T{Tt}^Q\sbr{\varphi(\cdot) \T{Th}^Q \varphi(\cdot) }(x)\dd{x} \dd{t} \dd{h} \dd{s} + \\\frac{T^3}{F_T^2} \intc{1}  \int_{\epsilon}^s \int_{0}^{1-s} \intr \T{Tt}^Q\sbr{\varphi(\cdot) \T{Th}^Q \varphi(\cdot) }(x)\dd{x} \dd{t} \dd{h} \dd{s} \\.
\end{multline*}
By virtue of convergence \eqref{eq:conv-1} we know that the first integral is finite the second can be written as
\begin{equation*}
 \frac{T^3}{F_T^2} \intc{1}  \inti \mathbf{1}_{(\epsilon,s)}(h) \inti \mathbf{1}_{(0,1-s)}(t) \intr \T{Tt}^Q\sbr{\varphi(\cdot) \T{Th}^Q \varphi(\cdot) }(x)\dd{x} \dd{t} \dd{h} \dd{s} = 
\end{equation*}
Changing integration variables $h\rightarrow h/T$ and $t\rightarrow t/T$ and using $F_T=T^{1/2}$ we have
\begin{equation*}
 \intc{1}  \inti \inti \intr \mathbf{1}_{(T\epsilon,Ts)}(h) \mathbf{1}_{(0,T(1-s))}(t)  \T{t}^Q\sbr{\varphi(\cdot) \T{h}^Q \varphi(\cdot) }(x)\dd{x} \dd{t} \dd{h} \dd{s}
\end{equation*}
The integrand converges point-wise to $0$ and is dominated by $\T{t}^Q\sbr{\varphi(\cdot) \T{h}^Q \varphi(\cdot) }(x)$ (which by virtue of previous argument is integrable) hence by Lebesgue's theorem the integral converges to $0$. $\delta$ can be chosen arbitrarily small, consequently
\begin{equation}
 \tilde{A}_{31}(T) \rightarrow 0.
\end{equation} 
The last step is to estimate the difference $\tilde{A}_3(T) - A_3(T)$. Be definition \eqref{eq:def-u} we have
\begin{equation*}
  \tilde{A}_3(T) - A_3(T) =  \intr\: \intc{T} \intc{t}  \T{t-s}^Q\sbr{\Psi_T(\cdot,T-s) u_T(\cdot, T-s,s)}(x) \dd{s} \dd{t} \dd{x}.
\end{equation*}
We can utilize inequality \eqref{eq:estimate2} hence
\begin{equation*}
  \tilde{A}_3(T) - A_3(T) \leq \frac{c_1}{F_T^2} \intr\: \intc{T} \intc{t}  \T{t-s}^Q {\Psi_T(x,T-s) }(x) \dd{s} \dd{t} \dd{x}.
\end{equation*}
Using notation \eqref{def:simpl} we have 
\begin{equation*}
 \tilde{A}_3(T) - A_3(T) \leq \frac{c_2}{F_T^3} \intr\: \intc{T} \intc{t}  \T{t-s}^Q \varphi(x)\dd{s} \dd{t} \dd{x}.
\end{equation*}
By assumption (A5) it is straightforward to check that this converges to $0$ as $T\rightarrow +\infty$.

\paragraph{Convergence of $A_{4}$} Similarly as for $A_3$ we  replace  $v$ with $\tilde{v}$ and  calculate the limit for such changed expression. In the end of the section we will prove that the change do not affect the limit
\begin{equation}
 \tilde{A}_{4}(T) = Vq \intr\: \intc{T} \intc{t}  \T{t-s}^Q\sbr{ \tilde{v}^2_T(\cdot, T-s,s)}(x) \dd{s} \dd{t} \dd{x}.\label{eq:tilde-a4}
\end{equation}
Firstly we use equation \eqref{sol:tv}
\begin{equation*}
 \tilde{A}_{4}(T) = Vq \intr\: \intc{T} \intc{t} \intc{s} \intc{s}  \T{t-s}^Q\sbr{ \T{s-v_1}^Q\Psi_T(\cdot,T-v_1) \T{s-v_2}^Q\Psi_T(\cdot,T-v_2) }(x) \dd{v_2}\dd{v_1} \dd{s} \dd{t} \dd{x}.
\end{equation*}
Using \eqref{def:simpl} and Fubini's theorem yield
\begin{equation*}
 \tilde{A}_{4}(T) = Vq \intc{T} \intc{t} \intc{s} \intc{s} \chi_T(T-v_1) \chi_T(T-v_2) \intr \T{t-s}^Q\sbr{ \T{s-v_1}^Q\varphi_T(\cdot) \T{s-v_2}^Q \varphi_T(\cdot) }(x)  \dd{x} \dd{v_2}\dd{v_1} \dd{s} \dd{t}.
\end{equation*}
We substitute $t\rightarrow Tt$, $s\rightarrow Ts$, $v_1\rightarrow Tv_1$, $v_2\rightarrow Tv_2$ and use \eqref{def:notation-T}
\begin{equation*}
 \tilde{A}_{4}(T) = Vq \frac{T^4}{F_T^2} \intc{1} \intc{t} \intc{s} \intc{s} \chi(1-v_1) \chi(1-v_2) \intr \T{T(t-s)}^Q\sbr{ \T{T(s-v_1)}^Q\varphi(\cdot) \T{T(s-v_2)}^Q \varphi(\cdot) }(x)  \dd{x} \dd{v_2}\dd{v_1} \dd{s} \dd{t}.
\end{equation*}
Next we change the order of integration and use symmetry
\begin{multline*}
 \tilde{A}_{4}(T) = 2Vq \frac{T^4}{F_T^2} \intc{1} \intc{v_1} \chi(1-v_1) \chi(1-v_2)\\ \int_{v_1}^1 \int_{s}^1  \intr \T{T(t-s)}^Q \sbr{ \T{T(s-v_1)}^Q\varphi(\cdot) \T{T(s-v_2)}^Q \varphi(\cdot) }(x) \dd{x}  \dd{t} \dd{s}  \dd{v_2}\dd{v_1}.
\end{multline*}
Let us now substitute $v_2\rightarrow v_1 - h$, $s\rightarrow s+v_1$, $t\rightarrow t+s$
\begin{multline*}
 \tilde{A}_{4}(T) = 2Vq \frac{T^4}{F_T^2} \intc{1} \intc{v_1} \chi(1-v_1) \chi(1-v_1+h) \\ \int_{0}^{1-v_1} \int_{0}^{1-s-v_1}  \intr \T{Tt}^Q \sbr{ \T{Ts}^Q\varphi(\cdot) \T{T(s+h)}^Q \varphi(\cdot) }(x) \dd{x}  \dd{t} \dd{s}  \dd{h}\dd{v_1}.
\end{multline*}
%
Now it is obvious that
\begin{equation}
 \tilde{A}_{4}(T) = \tilde{A}_{41}(T) + \tilde{A}_{42}(T),
\end{equation} 
where
\begin{equation*}
 \tilde{A}_{41}(T) = 2Vq \frac{T^4}{F_T^2} \intc{1} \chi(1-v_1)^2 \intc{v_1} \int_{0}^{1-v_1} \int_{0}^{1-s-v_1}  \intr \T{Tt}^Q \sbr{ \T{Ts}^Q\varphi(\cdot) \T{T(s+h)}^Q \varphi(\cdot) }(x) \dd{x}  \dd{t} \dd{s}  \dd{h}\dd{v_1},
\end{equation*}
\begin{multline*}
 \tilde{A}_{41}(T) = 2Vq \frac{T^4}{F_T^2} \intc{1} \intc{v_1} \chi(1-v_1) (\chi(1-v_1+h) - \chi(1-v_1+h)) \\\int_{0}^{1-v_1} \int_{0}^{1-s-v_1}  \intr \T{Tt}^Q \sbr{ \T{Ts}^Q\varphi(\cdot) \T{T(s+h)}^Q \varphi(\cdot) }(x) \dd{x}  \dd{t} \dd{s}  \dd{h}\dd{v_1}.
\end{multline*}

Recall that $F_T = T^{1/2}$ and substitute $h\rightarrow h/T$, $t\rightarrow t/T$, $s\rightarrow s/T$

\begin{equation*}
 \tilde{A}_{41}(T) = 2Vq \intc{1} \chi(1-v_1)^2 \intc{Tv_1} \int_{0}^{T(1-v_1)} \int_{0}^{T(1-s-v_1)}  \intr \T{t}^Q \sbr{ \T{s}^Q\varphi(\cdot) \T{(s+h)}^Q \varphi(\cdot) }(x) \dd{x}  \dd{t} \dd{s}  \dd{h}\dd{v_1},
\end{equation*}
Lebesgue's monotone convergence theorem implies 
\begin{equation*}
  \tilde{A}_{41}(T)  \rightarrow 2Vq \intc{1} \chi(1-v_1)^2 \intc{+\infty} \int_{0}^{+\infty} \int_{0}^{+\infty}  \intr \T{t}^Q \sbr{ \T{s}^Q\varphi(\cdot) \T{(s+h)}^Q \varphi(\cdot) }(x) \dd{x}  \dd{t} \dd{s}  \dd{h}\dd{v_1},
\end{equation*}
This can be written a bit shorter with potential notation
\begin{equation*}
  \tilde{A}_{41}(T)  \rightarrow 2Vq \intc{1} \chi(1-v_1)^2 \int_{0}^{+\infty}  \intr U^Q \sbr{ \T{s}^Q\varphi(\cdot) \T{s}^Q U^Q\varphi(\cdot) }(x) \dd{x}   \dd{s}  \dd{v_1},
\end{equation*}
Note that by assumptions (A5) the integral above is finite.\\
Now we fix $\delta>0$ and choose $\epsilon$ such that $\epsilon>0$ such that $\sup_{t\in (0,\epsilon)}|\chi(1-s+h)-\chi(1-s)|<\delta$ we 
\begin{multline*}
 \tilde{A}_{42}(T) = \delta C\frac{T^4}{F_T^2} \intc{1} \intc{\epsilon} \int_{0}^{1-v_1} \int_{0}^{1-s-v_1}  \intr \T{Tt}^Q \sbr{ \T{Ts}^Q\varphi(\cdot) \T{T(s+h)}^Q \varphi(\cdot) }(x) \dd{x}  \dd{t} \dd{s}  \dd{h}\dd{v_1} + \\
\frac{T^4}{F_T^2} \intc{1} \int_{\epsilon}^{v_1} \int_{0}^{1-v_1} \int_{0}^{1-s-v_1}  \intr \T{Tt}^Q \sbr{ \T{Ts}^Q\varphi(\cdot) \T{T(s+h)}^Q \varphi(\cdot) }(x) \dd{x}  \dd{t} \dd{s}  \dd{h}\dd{v_1}
\end{multline*}
It easy to deduce that the first integral is convergent (it is smaller then $\tilde{A}_{41}(T)$ in fact). Let us deal with the second one. It can be written as
\begin{equation*}
 \frac{T^4}{F_T^2} \intc{1} \inti \inti \inti \intr \mathbf{1}_{(\epsilon, v_1)}(h) \mathbf{1}_{(0, 1-v_1)}(s) \mathbf{1}_{(0, 1-s-v_1)}(t)  \T{Tt}^Q \sbr{ \T{Ts}^Q\varphi(\cdot) \T{T(s+h)}^Q \varphi(\cdot) }(x) \dd{x}  \dd{t} \dd{s}  \dd{h}\dd{v_1}
\end{equation*}
Let us substitute $s\rightarrow s/T$, $h\rightarrow h/T$, $t\rightarrow t/T$ and recall that $F_T = T^{1/2}$
\begin{equation*}
 \intc{1} \inti \inti \inti \intr \mathbf{1}_{(T\epsilon, Tv_1)}(h) \mathbf{1}_{(0, T(1-v_1))}(s) \mathbf{1}_{(0, T(1-s-v_1))}(t)  \T{t}^Q \sbr{ \T{s}^Q\varphi(\cdot) \T{s+h}^Q \varphi(\cdot) }(x) \dd{x}  \dd{t} \dd{s}  \dd{h}\dd{v_1}
\end{equation*}
By assumption (A5) the integrand is dominated by integrable function $\T{t}^Q \sbr{ \T{s}^Q\varphi(\cdot) \T{s+h}^Q \varphi(\cdot) }(x)$ hence Lebesgue's theorem implies the convergence to $0$. We can take $\delta$ arbitrarily small hence 
\begin{equation}
 \tilde{A}_{42}(T) \rightarrow 0.
\end{equation} 
We are left with estimation of $\tilde{A}_{4}(T) - A_4(T)$. By equation \eqref{eq:def-u} and inequality \eqref{ineq:tv>v} we have
\begin{equation*}
 \tilde{A}_{4}(T) - A_4(T) \leq 2Vq \intr\: \intc{T} \intc{t}  \T{t-s}^Q\sbr{ u_T\rbr{\cdot,T-s,s} \tilde{v}_T(\cdot, T-s,s)}(x) \dd{s} \dd{t} \dd{x}.
\end{equation*}
Using estimate \eqref{eq:estimate2} and \eqref{sol:tv} we write
\begin{equation*}
 \tilde{A}_{4}(T) - A_4(T) \leq \frac{2Vq}{F_T^2} \intr\: \intc{T} \intc{t}  \T{t-s}^Q\sbr{ \intc{s}\T{s-u}^Q \Psi_T(\cdot,T-s)}(x) \dd{u}\dd{s} \dd{t} \dd{x}.
\end{equation*}
Using \eqref{def:simpl}, after simple calculations, we get
\begin{equation*}
 \tilde{A}_{4}(T) - A_4(T) \leq \frac{2Vq}{F_T^3} \intr\: \intc{T} \intc{t} u \T{u}^Q\varphi(x) \dd{u} \dd{t} \dd{x}.
\end{equation*}
Now, by using d'Hospital rule, it follows easily from (A6) that  
\begin{equation*}
  \tilde{A}_{4}(T) - A_4(T) \rightarrow 0.
\end{equation*}

\subsection{Calculations - tightness} \label{sec:labt}
We are left with proving inequalities \eqref{eq:mamama1} and \eqref{eq:mamama2}. This can be done by evaluating the lfs of the inequalities using equations derived in Section \ref{sec:tightness_plus} and later estimating each of the resulting terms separately. Calculations are quite lengthy, for the sake of brevity in the paper we present only one illustrative example. Namely, consider the terms arising from  the second term of  \eqref{eq:holahola}  
\begin{equation}
D(T) = \intr\: \intc{T} \intc{h}  \T{h-w}^Q \rbr{ (v(0)'')^2(x,T-w,w)} \dd{w} \dd{h} \dd{x}.
\end{equation} 
From \eqref{kupkaka1} and \eqref{kupkaka2} it is easy to notice that $v''(0)\leq c/F_T^2$ hence
\begin{equation}
 D(T) \leq \frac{c}{F_T^2} \intr\: \intc{T} \intc{t}  \T{h-w}^Q { v(0)''(x,T-w,w)} \dd{w} \dd{h} \dd{x}.
\end{equation} 
Now substitute $v'$ with the first term of \eqref{kupkaka2}, we denote this new expression by $D_1$ (the expression resulting from the second term can to be estimated in a similar way)
\begin{equation}
 D_1(T) =  \frac{c}{F_T^2} \intr\: \intc{T} \intc{h}  \T{h-w}^Q \sbr{\intc{w}  \T{w-u}^Q \Psi_T(x,T-u) v(0)(x,T-u,u) \dd{u} } \dd{w} \dd{h} \dd{x}.
\end{equation} 
Finally we use \eqref{kupkaka2} which yields
\begin{equation}
 D_1(T) =  \frac{c}{F_T^2} \intr\: \intc{T} \intc{h}  \T{h-w}^Q \sbr{\intc{w}  \T{w-u}^Q \rbr{\Psi_T(x,T-u) \intc{u} \T{u-v}^Q \Psi_T(x,T-v) \dd{v}}\dd{u} } \dd{w} \dd{h} \dd{x}.
\end{equation} 
Changing the order of integration and using \eqref{def:simpl} get
\begin{equation}
 D_1(T) =  \frac{c}{F_T^4} \intc{T} \intc{h} \intc{w} \intc{u}  \chi_T(T-u) \chi_T(T-v) \intr \T{h-w}^Q \sbr{  \T{w-u}^Q \rbr{\varphi(x) \T{u-v}^Q \varphi(x) } } \dd{v} \dd{u}\dd{w} \dd{h} \dd{x}.
\end{equation} 
Obvious changes of variables gives
\begin{equation*}
 D_1(T) =  \frac{cT^4}{F_T^4} \intc{1} \intc{h} \intc{w} \intc{u}  \chi(1-u) \chi(1-v) \intr \T{T(h-w)}^Q \sbr{ \T{T(w-u)}^Q \rbr{\varphi(x) \T{T(u-v)}^Q \varphi(x) } } \dd{v} \dd{u}\dd{w} \dd{h} \dd{x}.
\end{equation*}
Recall that we are using the scheme presented in Section \ref{sec:scheme} hence inequality \eqref{tmp:chi-n} holds. We apply it to $\chi(1-v)$, use inequality $\T{T(u-v)}^Q \varphi(x)\leq c e^{-TQ(u-v)}$ and integrate with respect to $v$
\begin{equation*}
 D_1(T) \leq  \frac{cT^3}{F_T^4} (t-s)\intc{1} \intc{h} \intc{w}  \chi(1-u) \intr \T{T(h-w)}^Q \sbr{ \T{T(w-u)}^Q {\varphi(x) } } \dd{u} \dd{w} \dd{h} \dd{x}.
\end{equation*}
Changing the order of integration and integrating with respect to $w$ we get
\begin{equation*}
 D_1(T) \leq  \frac{cT^2}{F_T^2} (t-s) \intc{1} \intc{h}  \chi(1-u) T(h-u) \intr \T{T(h-u)}^Q \varphi(x) \dd{u} \dd{h} \dd{x}.
\end{equation*}
Easy calculations yield 
\begin{equation*}
 D_1(T) \leq  c (t-s) \intc{1} \intc{h}  \chi(1-u)  \intr T(h-u) \T{T(h-u)}^Q \varphi(x) \dd{u} \dd{h} \dd{x}.
\end{equation*}
Using assumption (A6) one easily gets 
\begin{equation*}
 D_1(T) \leq  c (t-s)^{2-\epsilon}T^{-\epsilon}.
\end{equation*}

\begin{acknowledgement*}
 The author would like to express his gratitude to Prof. Tomasz Bojdecki for fruitful discussions and remarks during my work on this paper.
\end{acknowledgement*}

\bibliographystyle{abbrv}
\bibliography{branching}

\end{document}